\newcommand{\sysn}{\left\{\begin{array}{rcl}}
\newcommand{\sysk}{\end{array}\right.}
\newtheorem{theorem}{Theorem}[section]
\newtheorem{lemma}[theorem]{Lemma}
\theoremstyle{example}
\newtheorem{example}[theorem]{Example}
\newtheorem{proposition}[theorem]{Proposition}
\theoremstyle{definition}
\newtheorem{corollary}[theorem]{Corollary}
\journal{...}
\begin{document}

\begin{frontmatter}



\title{On generalization of  theorems of Pestryakov}


\author{Alexander V. Osipov}

\ead{OAB@list.ru}


\address{Krasovskii Institute of Mathematics and Mechanics, Ural Federal
 University,

 Ural State University of Economics, Yekaterinburg, Russia}

\begin{abstract}
 In 1987 A.V. Pestryakov proved a series of theorems for cardinal functions
of the space $B_{\alpha}(X)$ of all real-valued functions of Baire
class $\alpha$ ($\alpha>0$), and he conjectured that most of these
theorems are true for  spaces containing all finite linear
combinations of characteristic functions of zero-sets in $X$. In
this paper we investigate for which theorems of Pestryakov
generalizations are valid. Also we prove some additional
propositions for function spaces applying the theory of selection
principles.
\end{abstract}

\begin{keyword}  space of Baire functions \sep
density \sep tightness \sep Lindel$\ddot{o}$f number \sep spread
\sep $G_{\delta}$-modification \sep selection principles \sep
cardinal functions



\MSC[2010] 54A25 \sep 54C35 \sep 54C30

\end{keyword}

\end{frontmatter}



\section{Introduction}

In this paper by a space we shall always mean a Tychonoff space.
Let $C_p(X)$ denote the space of continuous real-valued functions
$C(X)$ on a space
 $X$ with the topology of pointwise convergence. Let $B_0(X)=C(X)$
 and inductively define $B_{\alpha}(X)$ for each ordinal
 $\alpha\leq \omega_1$ to be the space of pointwise limits of
 sequences of functions in $\bigcup\limits_{\beta<\alpha}
 B_{\beta}(X)$. So $B_{\alpha}(X)$ a set
of all functions of Baire class $\alpha$, defined
 on a Tychonoff space $X$, provided with the pointwise convergence topology.

 The family of Baire sets of a space $X$ is the smallest family of
 sets containing the zero sets of continuous real-valued functions
 (i.e. of the form $Z(f)=\{x\in X: f(x)=0\}$), and closed under
 countable unions and countable intersections.
  The Baire sets of $X$ of multiplicative class $0$, denoted
  $Z(X)$, are the zero-sets of continuous real-valued functions.
  The sets of additive class $0$, denoted $CZ(X)$, are the
  complements of the sets in $Z(X)$.

Let $\mathcal{A}$ and $\mathcal{B}$ be sets whose elements are
families of subsets of an infinite set $X$. Then $S_1(\mathcal{A},
\mathcal{B})$ denotes the selection principle:

For each sequence $(A_n : n\in \mathbb{N})$ of elements of
$\mathcal{A}$ there is a sequence $(b_n: n\in \mathbb{N})$ such
that for each $n$, $b_n\in A_n$, and $\{b_n: n\in \mathbb{N}\}$ is
an element of $\mathcal{B}$.

The following prototype of many classical properties is called
"$\mathcal{A}$ choose $\mathcal{B}$" in \cite{ts}.

${\mathcal{A}\choose\mathcal{B}}$ : For each $A\in \mathcal{A}$
there exists $B\subset A$ such that $B\in \mathcal{B}$.

Clearly that $S_1(\mathcal{A}, \mathcal{B})$ implies
${\mathcal{A}\choose\mathcal{B}}$.

In this paper, by a cover we mean a nontrivial one, that is,
$\mathcal{U}$ is a cover of $X$ if $X=\bigcup \mathcal{U}$ and
$X\notin \mathcal{U}$.

 A cover $\mathcal{U}$ of a space $X$ is:

 $\bullet$ an {\it $\omega$-cover} if every finite subset of $X$ is contained in a
 member of $\mathcal{U}$.

$\bullet$ a {\it $\gamma$-cover} if it is infinite and each $x\in
X$ belongs to all but finitely many elements of $\mathcal{U}$.
Note that every $\gamma$-cover contains a countably
$\gamma$-cover.

For a topological space $X$ we denote:

$\bullet$ $\Omega$ --- the family of all open $\omega$-covers of
$X$;

$\bullet$ $\Gamma$ --- the family of all open $\gamma$-covers of
$X$;

$\bullet$ $Z_{\Omega}$ --- the family of all countable
$\omega$-covers of $X$ by zero-sets in $X$;

$\bullet$ $Z_{\Gamma}$ --- the family of all countable
$\gamma$-covers of $X$ by zero-sets in $X$.

\medskip

Let $(X,\tau)$ be a topological space. The {\em Baire topology}
$\tau_b$ on $X$ is the topology on the underlying set $X$ having
for a basis the family of all zero-sets of $X$. Since the
countable intersection of zero-sets is also a zero-set, it follows
that the space $X$ endowed with the Baire topology and denoted by
$X_{\aleph_0}$ is a $P$-space. Recall that a topological space is
called a {\em $P$-space} if the intersection of a countable family
of open sets is open. Let us recall also that the family of
$G_\delta$-sets in $X$ forms a base of the topology $\tau_\delta$
on $X$, and the space $X$ with the topology $\tau_\delta$ is
called  the {\em $P$-modification of} $X$ and is denoted by $PX$
or $X_\delta$ (see \cite{arpo,belspa,leri}). Clearly, $PX$ is a
$P$-space and $\tau_\delta$ is finer than the Baire topology
$\tau_b$. If $X$ is a Tychonoff space, then $X_{\aleph_0}=PX$ and
$X_{\aleph_0}$ is a Tychonoff space.  Note that the topology
$X_{\aleph_0}$ coincides with the weak topology generated by
$B_{\alpha}(X)$ for each $\alpha>0$ (\cite{jan1}).

Further, we consider the spaces $C(X_{\aleph_0})$ and
$B_{\alpha}(X)$ with the topology of pointwise convergence.

We will use the standard notation for usual cardinal invariants,
so $c$, $\chi$, $\pi\chi$, $\psi$, $w$, $\pi w$, $\psi w$, $nw$,
$d$, $t$, $l$, $s$, denote cellularity, character,
$\pi$-character, pseudocharacter, weight, $\pi$-weight,
pseudoweight, network weight, density, tightness, the
Lindel$\ddot{o}$f number, spread, respectively, see
\cite{eng,juh}. For a cardinal function $\epsilon$ denoted by $h
\epsilon(Y)=\{\epsilon(Z): Z\subseteq Y\}$,
$i\epsilon(Y)=\min\{\epsilon(Z):$ $Y$ admits a one-to-one
continuous mapping onto a space $Z \}$ and $\epsilon^*(Y)=\sup
\{\epsilon(Y^n): n\in \mathbb{N}\}$.

Since $B_{\alpha}(X)$ is dense in $\mathbb{R}^{X}$
($0\leq\alpha\leq \omega_1$), $c(B_{\alpha}(X))=\omega_0$,
$\pi\chi(B_{\alpha}(X))=\chi(B_{\alpha}(X))=\pi
w(B_{\alpha}(X))=w(B_{\alpha}(X))=|X|$.

 In 1987 A.V. Pestryakov proved the following theorems (P1-P9) for a space $B_{\alpha}(X)$ ($0<\alpha\leq \omega_1$)(\cite{pes1,pes2}).

\begin{theorem}$({\bf P1})$ $t(B_{\alpha}(X))=l^*(X_{\aleph_0})$.

\end{theorem}

\begin{theorem}$({\bf P2})$ $hd(B_{\alpha}(X))= hl^*(X_{\aleph_0})$.

\end{theorem}

\begin{theorem}$({\bf P3})$ $hl(B_{\alpha}(X))= hd^*(X_{\aleph_0})$.
\end{theorem}

\begin{theorem}$({\bf P4})$ $s(B_{\alpha}(X))= s^*(X_{\aleph_0})$.
\end{theorem}

\begin{theorem}$({\bf P5})$\label{p5} The
followings statements are equivalent.

\begin{enumerate}

\item $B_{\alpha}(X)$ is Fr$\acute{e}$chet-Urysohn;

\item $B_{\alpha}(X)$ is sequential;

\item $B_{\alpha}(X)$ is a $k$-space;

\item $B_{\alpha}(X)$ is a $\omega_1$-$k$-space;

\item $B_{\alpha}(X)$ has countable tightness;

\item $X_{\aleph_0}$ satisfies ${\Omega\choose\Gamma}$;

\item $X_{\aleph_0}$ is Lindel$\ddot{o}$f.

\end{enumerate}

\end{theorem}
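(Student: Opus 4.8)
\emph{Proof strategy.} The structural point is that, since (as recalled above) the topology of $X_{\aleph_0}$ is the weak topology generated by $B_{\alpha}(X)$, every $f\in B_{\alpha}(X)$ is continuous on $X_{\aleph_0}$, so $B_{\alpha}(X)$ is a linear subspace of $C_p(X_{\aleph_0})$, dense there because it is dense in $\mathbb{R}^{X}\supseteq C_p(X_{\aleph_0})$. Moreover $X_{\aleph_0}$ is a $P$-space in which the zero-sets of $X$ form a clopen base, so it is zero-dimensional and its clopen sets are closed under countable unions and intersections. Since $B_{\alpha}(X)$ is a topological vector space, hence a topological group, in arguments about closures we may translate any prescribed limit point to $0$. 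The plan is to prove the cycle $(6)\Rightarrow(1)\Rightarrow(2)\Rightarrow(3)\Rightarrow(4)\Rightarrow(5)\Rightarrow(7)\Rightarrow(6)$, the implications $(1)\Rightarrow(2)\Rightarrow(3)\Rightarrow(4)$ being the standard ones.

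Two of the remaining steps are short. For $(5)\Rightarrow(7)$: by $({\bf P1})$, $l(X_{\aleph_0})\le\sup_{n}l(X^{n}_{\aleph_0})=t(B_{\alpha}(X))=\omega_{0}$, so $X_{\aleph_0}$ is Lindel\"of. For $(7)\Rightarrow(6)$: a Lindel\"of $P$-space has all finite powers Lindel\"of (the product of a Lindel\"of space with a Lindel\"of $P$-space is Lindel\"of), so every open $\omega$-cover $\mathcal{U}$ of $X_{\aleph_0}$ has a countable sub-$\omega$-cover (cover $X^{n}_{\aleph_0}$ by the boxes $U^{n}$, $U\in\mathcal{U}$, take countable subcovers for each $n$ and unite them); using zero-dimensionality and the $P$-space property one refines such a sub-$\omega$-cover to a clopen one and extracts a $\gamma$-subcover, which is a $\gamma$-subcover of $\mathcal{U}$. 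Thus $X_{\aleph_0}$ satisfies ${\Omega\choose\Gamma}$. This ``$P$-space collapse'' ($X_{\aleph_0}$ Lindel\"of $\Leftrightarrow$ $X_{\aleph_0}$ satisfies ${\Omega\choose\Gamma}$) has no analogue for general spaces and is one of the points needing care.

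For $(6)\Rightarrow(1)$ I would use the Gerlits--Nagy translation argument. Let $A\subseteq B_{\alpha}(X)$ and $f\in\overline{A}$; translating, assume $f=0$. For $k\in\mathbb{N}$ put $\mathcal{W}_{k}=\{\,\{x:|g(x)|<1/k\}:g\in A\,\}$; for all large $k$ (otherwise $A\subseteq\{0\}$ and the claim is trivial) $\mathcal{W}_{k}$ is an open $\omega$-cover of $X_{\aleph_0}$, since for finite $F\subseteq X$ there is $g\in A$ with $|g|<1/k$ on $F$ because $0\in\overline{A}$ in the pointwise topology. Applying ${\Omega\choose\Gamma}$ in its equivalent selective form $S_{1}(\Omega,\Gamma)$ to $(\mathcal{W}_{k})_{k}$, choose $g_{k}\in A$ with $\{\,\{x:|g_{k}(x)|<1/k\}:k\in\mathbb{N}\,\}$ a $\gamma$-cover of $X_{\aleph_0}$; then $|g_{k}(x)|<1/k$ for all but finitely many $k$, for every $x$, so $g_{k}\to 0$ pointwise. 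Hence $B_{\alpha}(X)$ is Fr\'echet--Urysohn.

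The main obstacle is $(4)\Rightarrow(5)$, the analogue of the difficult implication ``$C_p(X)$ a $k$-space $\Rightarrow$ $X$ has the $\gamma$-property'' in the classical theorem. I would argue contrapositively: if $t(B_{\alpha}(X))>\omega_{0}$, then by $({\bf P1})$ some $X^{n}_{\aleph_0}$ is not Lindel\"of, and from a witnessing open cover without a countable subcover one builds a subset $E\subseteq B_{\alpha}(X)$ which is not closed but meets every member of the test class defining $\omega_{1}$-$k$-spaces in a relatively closed set, contradicting $(4)$. The functions used here are finite linear combinations of characteristic functions of zero-sets of $X$ (these lie in $B_{1}(X)\subseteq B_{\alpha}(X)$, as $\chi_{Z(h)}=\lim_{m}\max(0,1-m|h|)$) --- precisely the hypothesis of Pestriakov's conjecture; showing that this class of functions suffices, together with the $P$-space collapse invoked in $(7)\Rightarrow(6)$, is where the real work lies.
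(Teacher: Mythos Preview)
The paper does not give a self-contained proof of P5 (it records it as Pestriakov's result) but proves the generalization Theorem~\ref{th4} for arbitrary $\mathbb{B}(X)\in\mathbb{B}$, and that proof specializes to P5, so I compare against that.

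Your cycle and the paper's logic agree on the trivial chain $(1)\Rightarrow(2)\Rightarrow(3)\Rightarrow(4)$ and on $(5)\Rightarrow(7)$ via the tightness formula. For $(6)\Rightarrow(1)$ you reproduce the Gerlits--Nagy selection argument inside $B_{\alpha}(X)$; the paper simply cites Gerlits--Nagy (Theorem~\ref{th7}) for $C_p(X_{\aleph_0})$ and restricts to the subspace. For $(7)\Rightarrow(6)$ you argue the ``$P$-space collapse'' by hand; the paper instead quotes Arhangel'skii's theorem (Theorem~\ref{arh}) to get $(7)\Rightarrow(1)$ directly. These differences are cosmetic.

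The substantive divergence is how condition $(4)$ is tied in. The paper proves $(4)\Rightarrow(6)$ positively via Theorem~\ref{th33}, following Gerlits' Theorem~4 in \cite{ger1}: assuming $\mathbb{B}(X)$ is an $\omega_1$-$k$-space, one uses the $\omega_1$-initially compact witnesses to extract $\gamma$-subcovers in $X_{\aleph_0}$. You instead aim for the contrapositive $(\neg 5)\Rightarrow(\neg 4)$: from a cover of some $X^{n}_{\aleph_0}$ with no countable subcover, build $E\subseteq B_{\alpha}(X)$ that is non-closed yet meets every $\omega_1$-initially compact set in a relatively closed set. This is a reasonable programme, but you leave exactly this construction as a sketch (``where the real work lies''): you do not say what $E$ is, nor why an arbitrary $\omega_1$-initially compact $C$ meets it in a closed set. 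That is the one genuine gap in your proposal. The Gerlits route the paper takes is already written down in \cite{ger1} and transfers verbatim once one notes that the functions used there are finite linear combinations of characteristic functions of zero-sets; you will likely find it easier to fill $(4)\Rightarrow(6)$ along those lines than to complete your contrapositive.
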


\begin{theorem}$({\bf P6})$ $d(B_{\alpha}(X))=iw(X)$.

\end{theorem}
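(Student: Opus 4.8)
The plan is to establish the two inequalities $d(B_{\alpha}(X))\le iw(X)$ and $iw(X)\le d(B_{\alpha}(X))$ separately (we may assume $X$ infinite, so that both sides are infinite). Throughout I use that $B_{\alpha}(X)$ contains $B_0(X)=C(X)$ and is dense in $\mathbb{R}^X$ --- hence every dense subset of $B_{\alpha}(X)$ is dense in $\mathbb{R}^X$, and $C_p(X)$ itself is dense in $B_{\alpha}(X)$ --- together with the standard fact that for an infinite Tychonoff space $iw(X)$ equals the least cardinality of a subfamily of $C(X)$ that separates the points of $X$.

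For $d(B_{\alpha}(X))\le iw(X)$: pick a point-separating family $\mathcal{F}\subseteq C(X)$ with $|\mathcal{F}|=iw(X)$ and let $A$ be the $\mathbb{Q}$-subalgebra of $C(X)$ generated by $\mathcal{F}$ together with the rational constant functions; then $A\subseteq C(X)\subseteq B_{\alpha}(X)$ and $|A|\le iw(X)$. Since $A$ separates points and contains all rational constants, for each finite $F\subseteq X$ the evaluation image of $A$ in $\mathbb{R}^{F}$ is a $\mathbb{Q}$-subalgebra that separates coordinates and contains the rational diagonal, hence is dense in $\mathbb{R}^{F}$; this forces $A$ to be dense in $\mathbb{R}^X$, and therefore in $B_{\alpha}(X)$, so $d(B_{\alpha}(X))\le iw(X)$. (Alternatively, since $C_p(X)$ is dense in $B_{\alpha}(X)$ one gets $d(B_{\alpha}(X))\le d(C_p(X))=iw(X)$ straight from the classical formula for $d(C_p(X))$.)

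For $iw(X)\le d(B_{\alpha}(X))$ the crux is that a function of Baire class $\alpha$ depends on only countably many continuous functions. By transfinite induction on $\alpha$ I would attach to every $f\in B_{\alpha}(X)$ a countable set $\sigma(f)\subseteq C(X)$ which is \emph{faithful} to $f$, meaning $h(x)=h(y)$ for all $h\in\sigma(f)$ implies $f(x)=f(y)$: put $\sigma(f)=\{f\}$ if $f\in B_0(X)$, and $\sigma(f)=\bigcup_{k}\sigma(g_k)$ if $f=\lim_{k}g_k$ pointwise with $g_k\in\bigcup_{\beta<\alpha}B_{\beta}(X)$ (faithfulness is inherited by pointwise sequential limits). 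Now take a dense $D\subseteq B_{\alpha}(X)$ with $|D|=d(B_{\alpha}(X))$ and set $\Sigma=\bigcup_{f\in D}\sigma(f)\subseteq C(X)$, so $|\Sigma|\le d(B_{\alpha}(X))\cdot\aleph_0=d(B_{\alpha}(X))$. If $\Sigma$ did not separate points, there would be $x\neq y$ with $h(x)=h(y)$ for every $h\in\Sigma$, whence $f(x)=f(y)$ for every $f\in D$; but the nonempty open set $\{g\in\mathbb{R}^X:\ g(x)<0,\ g(y)>1\}$ meets $D$ because $D$ is dense in $\mathbb{R}^X$, a contradiction. Hence $\Sigma$ separates the points of $X$ and $iw(X)\le|\Sigma|\le d(B_{\alpha}(X))$.

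I expect the only genuine obstacle to be the construction $f\mapsto\sigma(f)$: making the transfinite recursion rigorous (for instance by indexing it by the least Baire class of $f$, which is well-founded, since in a representation $f=\lim_k g_k$ each $g_k$ has strictly smaller least class than $f$) and checking that faithfulness passes through pointwise limits. The remaining ingredients --- the finite-dimensional Stone--Weierstrass/linear-algebra statement behind the upper bound and the cardinal bookkeeping --- are routine.
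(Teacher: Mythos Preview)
Your proof is correct, and the underlying engine --- each $f\in B_{\alpha}(X)$ is ``countably generated'' from continuous functions via the obvious transfinite recursion --- is the same one the paper uses. The paper packages it differently: it records this fact as $T(C_p(X),B_{\alpha}(X))=\omega_0$, i.e.\ every $f\in B_{\alpha}(X)$ lies in the closure (in $\mathbb{R}^X$) of a countable $B_f\subseteq C(X)$; then, given a dense $D\subseteq B_{\alpha}(X)$, the set $\bigcup_{f\in D}B_f$ is dense in $C_p(X)$ with cardinality $\le |D|$, so $iw(X)=d(C_p(X))\le d(B_{\alpha}(X))$, and the upper bound is just density of $C_p(X)$ in $B_{\alpha}(X)$ plus Noble's theorem (your ``alternative''). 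You instead extract from the same recursion only the weaker \emph{faithfulness} of $\sigma(f)$ --- which indeed follows from $f\in\overline{\sigma(f)}$, since $\{g:g(x)=g(y)\}$ is closed in $\mathbb{R}^X$ --- and go straight to the point-separating description of $iw(X)$, never returning to $d(C_p(X))$. Both endgames are short; the paper's has the virtue of isolating a reusable general principle (a dense subspace $A\subseteq Y$ with $T(A,Y)=\omega_0$ satisfies $d(A)=d(Y)$), while yours is self-contained and avoids quoting Noble's theorem for the lower bound.
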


\begin{theorem}$({\bf P7})$ For $0<\alpha\leq \omega_1$,
$\psi(B_{\alpha}(X))=\psi w(B_{\alpha}(X))=i\chi
(B_{\alpha}(X))=iw(B_{\alpha}(X))=d(X_{\aleph_0})$.

\end{theorem}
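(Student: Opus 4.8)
The plan is to establish the two estimates $iw(B_{\alpha}(X))\le d(X_{\aleph_0})$ and $d(X_{\aleph_0})\le\psi(B_{\alpha}(X))$, and then to close the chain of equalities using the set-theoretic inequalities
\[
\psi(Y)\le\psi w(Y)\le iw(Y),\qquad \psi(Y)\le i\chi(Y)\le iw(Y),
\]
valid for every Tychonoff space $Y$ (a one-to-one continuous map of $Y$ onto a Tychonoff space $Z$ turns a base, respectively a local base, of $Z$ into a family of open subsets of $Y$ of the same cardinality whose members containing a given point intersect to that point). Once the two estimates are proved, all five quantities are squeezed between $d(X_{\aleph_0})$ and $d(X_{\aleph_0})$, hence equal; this is the natural analogue, with $X_{\aleph_0}$ in place of $X$, of the classical $C_p$-theoretic equalities. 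Two facts recorded in the introduction will be used throughout: every $f\in B_{\alpha}(X)$ is continuous on $X_{\aleph_0}$, and the topology of $X_{\aleph_0}$ is exactly the weak topology generated by $B_{\alpha}(X)$. I will also use that $B_{\alpha}(X)$ is a topological group closed under $f\mapsto|f|$, $(f,g)\mapsto\max(f,g)$ and the addition of real constants (the usual transfinite induction on $\alpha$); by homogeneity $\psi(B_{\alpha}(X))=\psi(\mathbf{0},B_{\alpha}(X))$, where $\mathbf{0}$ is the zero function.

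For $iw(B_{\alpha}(X))\le d(X_{\aleph_0})$ I would fix $D\subseteq X$ dense in $X_{\aleph_0}$ with $|D|=d(X_{\aleph_0})$ and consider the restriction map $e\colon B_{\alpha}(X)\to\mathbb{R}^{D}$, $e(f)=f|_{D}$. It is continuous since $B_{\alpha}(X)$ carries the topology of pointwise convergence, and it is injective: if $f|_{D}=g|_{D}$, then $f$ and $g$, being continuous on $X_{\aleph_0}$ and equal on the dense set $D$, coincide on all of $X_{\aleph_0}=X$. Thus $B_{\alpha}(X)$ maps by a one-to-one continuous map onto a subspace of $\mathbb{R}^{D}$, and $iw(B_{\alpha}(X))\le w(\mathbb{R}^{D})=\max(|D|,\aleph_0)=d(X_{\aleph_0})$.

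For $d(X_{\aleph_0})\le\psi(B_{\alpha}(X))$, let $\kappa=\psi(B_{\alpha}(X))$ and choose open neighbourhoods $\{U_{\gamma}:\gamma<\kappa\}$ of $\mathbf{0}$ with $\bigcap_{\gamma<\kappa}U_{\gamma}=\{\mathbf{0}\}$; shrinking, I may assume $U_{\gamma}=\{f\in B_{\alpha}(X):|f(x)|<\varepsilon_{\gamma}\ \ \forall x\in S_{\gamma}\}$ for a finite $S_{\gamma}\subseteq X$ and $\varepsilon_{\gamma}>0$, the intersection remaining $\{\mathbf{0}\}$. Put $D=\bigcup_{\gamma<\kappa}S_{\gamma}$, so $|D|\le\kappa$; it suffices to show $D$ is dense in $X_{\aleph_0}$, for then $d(X_{\aleph_0})\le|D|\le\kappa$. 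If $D$ were not dense, some $p\in X$ would have a basic $X_{\aleph_0}$-neighbourhood disjoint from $D$, which, since $X_{\aleph_0}$ carries the weak topology generated by $B_{\alpha}(X)$, yields $f_{1},\dots,f_{n}\in B_{\alpha}(X)$ and $\varepsilon>0$ with $\max_{i\le n}|f_{i}(d)-f_{i}(p)|\ge\varepsilon$ for every $d\in D$. Then the function $g=\max\bigl(\varepsilon-\max_{i\le n}|f_{i}-f_{i}(p)|,\,0\bigr)$ belongs to $B_{\alpha}(X)$, vanishes on $D$ (hence on each $S_{\gamma}$), yet $g(p)=\varepsilon>0$; so $g\in\bigcap_{\gamma<\kappa}U_{\gamma}\setminus\{\mathbf{0}\}$, a contradiction.

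The main obstacle is the density argument in the last step: building a nonzero Baire class $\alpha$ function that vanishes on $D$ out of the failure of density. This is precisely where the closure of $B_{\alpha}(X)$ under $\max$ and subtraction is needed (and is not available for the smaller function families treated elsewhere in the paper); the remaining arguments are the general sandwich and the two cited facts about $X_{\aleph_0}$. The degenerate case of finite $X$ is covered by the standing convention that cardinal invariants are at least $\aleph_0$.
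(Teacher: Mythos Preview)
Your proof is correct and follows the same sandwich strategy as the paper's (which actually proves the more general version for any $\mathbb{B}(X)\in\mathbb{B}$): establish $iw\le d(X_{\aleph_0})$ via restriction to a dense set, establish $d(X_{\aleph_0})\le\psi$ by producing a nonzero function vanishing on the support set of a pseudobase, and close with the standard chain $\psi\le\psi w,\ i\chi\le iw$. The one substantive difference is the witness in the second inequality. Where you invoke the weak-topology description of $X_{\aleph_0}$ and build $g=\max\bigl(\varepsilon-\max_i|f_i-f_i(p)|,\,0\bigr)$ using the lattice closure of $B_\alpha(X)$, the paper simply notes that zero-sets of $X$ form a base for $X_{\aleph_0}$, so a non-dense set of size $<d(X_{\aleph_0})$ misses some nonempty zero-set $Z$, and the characteristic function $\chi_Z$ already does the job. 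This is shorter and---exactly as you flag in your final paragraph---is what lets the argument extend from $B_\alpha(X)$ to arbitrary $\mathbb{B}(X)\supseteq L(X)$, where lattice operations need not be available.
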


\begin{theorem}$({\bf P8})$ $nw(B_{\alpha}(X))=nw(X_{\aleph_0})$.

\end{theorem}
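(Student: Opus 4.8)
The plan is to derive the equality from the classical theorem of Arhangel'skii that $nw(C_p(Y)) = nw(Y)$ for every Tychonoff space $Y$ (in fact only the inequality $nw(C_p(Y)) \le nw(Y)$ will be used), by exhibiting $B_\alpha(X)$ and $X_{\aleph_0}$ as subspaces of each other's function spaces. Concretely I would prove: (i) $B_\alpha(X)$, with the topology of pointwise convergence, is a topological subspace of $C_p(X_{\aleph_0})$; and (ii) $X_{\aleph_0}$ is a topological subspace of $C_p(B_\alpha(X))$. Since network weight does not increase under passage to subspaces, (i), (ii) and Arhangel'skii's inequality applied to $Y = X_{\aleph_0}$ and to $Y = B_\alpha(X)$ give
\[
nw(B_\alpha(X)) \le nw(C_p(X_{\aleph_0})) \le nw(X_{\aleph_0}) \le nw(C_p(B_\alpha(X))) \le nw(B_\alpha(X)),
\]
so all five quantities coincide, which is exactly ${\bf P8}$.

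For (i) it suffices to observe that $B_\alpha(X) \subseteq C(X_{\aleph_0})$ as sets: the topology of pointwise convergence on a family of real-valued functions on $X$ is the subspace topology from $\mathbb{R}^X$ and so is insensitive to the topology carried by $X$, whence $C_p(X_{\aleph_0})$ induces on $B_\alpha(X)$ exactly its usual topology. The inclusion $B_\alpha(X) \subseteq C(X_{\aleph_0})$ is immediate from the fact recalled above, that for $\alpha > 0$ the topology of $X_{\aleph_0}$ is the weak topology generated by $B_\alpha(X)$ (\cite{jan1}) --- a weak topology being by definition the coarsest one making all the given functions continuous. (A self-contained alternative: $X_{\aleph_0}$ is a $P$-space, so $C(X_{\aleph_0})$ is closed under pointwise limits of sequences; starting from $B_0(X) = C(X) \subseteq C(X_{\aleph_0})$, a transfinite induction on $\alpha$ gives $B_\alpha(X) \subseteq C(X_{\aleph_0})$ for all $\alpha \le \omega_1$.) For (ii) I would use the evaluation map $e \colon X_{\aleph_0} \to C_p(B_\alpha(X))$, $e(x)(f) = f(x)$. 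Each $e(x)$ is a pointwise-continuous functional on $B_\alpha(X)$, so $e$ is well defined; $e$ is continuous by (i) (each coordinate $x \mapsto f(x)$ is continuous on $X_{\aleph_0}$); $e$ is injective because $C(X) \subseteq B_\alpha(X)$ separates the points of the Tychonoff space $X$; and $e$ is a homeomorphism onto its image precisely because the topology it pulls back from $C_p(B_\alpha(X))$ --- namely the weak topology generated by $B_\alpha(X)$ --- is the topology of $X_{\aleph_0}$ (\cite{jan1}). Thus $e$ is an embedding and (ii) holds.

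The crux of the whole argument, and the only step that is not purely formal, is the identity that the weak topology generated by $B_\alpha(X)$ equals the topology of $X_{\aleph_0}$. One half of it --- that every $f \in B_\alpha(X)$ is continuous on $X_{\aleph_0}$ --- underlies (i); the other half --- that $B_\alpha(X)$ already detects every $G_\delta$-subset of $X$ --- is what makes the evaluation map in (ii) open onto its image, and this is where $\alpha > 0$ is indispensable: the characteristic function of a zero-set of $X$ lies in $B_1(X)$ but not in $B_0(X) = C(X)$, and for $\alpha = 0$ the weak topology is only that of $X$, so the statement fails (for instance $nw(B_0(\mathbb{R})) = nw(C_p(\mathbb{R})) = \omega$, whereas $\mathbb{R}_{\aleph_0}$ is discrete, so $nw(\mathbb{R}_{\aleph_0}) = \mathfrak{c}$). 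This identification of topologies is exactly what \cite{jan1} supplies; granting it, the remainder is routine $C_p$-duality together with Arhangel'skii's network-weight theorem. The same reasoning, essentially unchanged, yields $nw(\mathcal{L}) = nw(X_{\aleph_0})$ for any linear subspace $\mathcal{L}$ of $C(X_{\aleph_0})$, taken with the topology of pointwise convergence, that contains all finite linear combinations of characteristic functions of zero-sets of $X$ --- presumably the form in which the generalization is desired.
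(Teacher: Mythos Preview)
Your proof is correct and follows essentially the same route as the paper: the paper's Section~9 proves the generalization $nw(\mathbb{B}(X))=nw(X_{\aleph_0})$ by combining the inclusion $\mathbb{B}(X)\subset C_p(X_{\aleph_0})$ with a lemma that the evaluation map $x\mapsto (f\mapsto f(x))$ embeds $X_{\aleph_0}$ into $C_p(\mathbb{B}(X))$, and then applies Arhangel'skii's identity $nw(C_p(Y))=nw(Y)$ together with hereditariness of $nw$---exactly your chain of inequalities. The only cosmetic difference is that the paper verifies openness of the evaluation map directly via characteristic functions of zero-sets, whereas you invoke Jayne's identification of the weak topology generated by $B_\alpha(X)$ with the $G_\delta$-topology; these are two phrasings of the same fact.
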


\begin{theorem}$({\bf P9})$ $l(B_{\alpha}(X))\geq t^*(X_{\aleph_0})$.

\end{theorem}

\medskip

Put $L(X)=\{n_1\cdot f_{Z_1}+...+n_k\cdot f_{Z_k} :$ $f_{Z_i}$ is
the characteristic function of $Z_i$, $Z_i\in Z(X)$, $k,i,n_k \in
\mathbb{N}\}$. For a space $X$, define  $\mathbb{B}=\{Y : L(X)
\subseteq Y\subseteq C(X_{\aleph_0}) \}$. For example,
$B_{\alpha}(X)\in \mathbb{B}$ for each $\alpha>0$,
$C(X_{\aleph_0})\in \mathbb{B}$ and $[C_p(X)]'_{\omega_0}=\bigcup
\{cl_{\mathbb{R}^X} B : B\subset C_p(X), |B|\leq \omega_0\}\in
\mathbb{B}$.

 Pestryakov conjectured that most of theorems (P1-P9) are
true for any $\mathbb{B}(X)\in \mathbb{B}$. In this paper we check
for which Pestryakov's theorems generalizations are valid. Also we
prove some additional propositions for spaces in the class
$\mathbb{B}$.

\section{Tightness}

The following result is well known \cite{arh2}.

\begin{theorem}(Arhangel'skii-Pytkeev) $t(C_p(X))=l^*(X)$.
\end{theorem}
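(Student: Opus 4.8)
The plan is to route the proof through open $\omega$-covers, which mediate between the pointwise topology on $C(X)$ and the covering structure of the finite powers $X^n$. Concretely, I would fix an infinite cardinal $\kappa$ and prove two equivalences, then compose them and let $\kappa$ vary.

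\emph{Step 1 (tightness versus $\omega$-covers).} I would show that $t(C_p(X))\le\kappa$ if and only if every open $\omega$-cover of $X$ admits an open $\omega$-subcover of cardinality $\le\kappa$. For the direction $(\Leftarrow)$, given $A\subseteq C(X)$ with $f\in\overline{A}$, translate so that $f=0$; for each $m\in\mathbb{N}$ the family $\{g^{-1}((-1/m,1/m)):g\in A\}$ is an open $\omega$-cover of $X$ (since $0\in\overline{A}$ forces, for every finite $F$, some $g$ with $|g|<1/m$ on $F$), so it has an $\omega$-subcover indexed by a set $A_m\subseteq A$ with $|A_m|\le\kappa$, and then $A_0=\bigcup_m A_m$ has $0$ in its closure. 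For $(\Rightarrow)$, given an open $\omega$-cover $\mathcal{U}$, use complete regularity: put $A_U=\{h\in C(X):h\equiv 1 \text{ on } X\setminus U\}$ and $A=\bigcup_{U\in\mathcal{U}}A_U$; for each finite $F$ pick $U\supseteq F$ and a Tychonoff function vanishing on $F$ and equal to $1$ off $U$, which shows $0\in\overline{A}$; a $\kappa$-sized $A_0\subseteq A$ with $0\in\overline{A_0}$ then yields, by choosing one $U$ per $h\in A_0$, the required $\omega$-subcover.

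\emph{Step 2 ($\omega$-covers versus Lindel\"of powers).} I would show that every open $\omega$-cover of $X$ has an open $\omega$-subcover of cardinality $\le\kappa$ if and only if $l(X^n)\le\kappa$ for all $n$. The implication from Lindel\"of powers is easy: if $\mathcal{U}$ is an open $\omega$-cover, then $\{U^n:U\in\mathcal{U}\}$ covers $X^n$, so take $\le\kappa$-sized subcovers for each $n$ and unite the corresponding subfamilies of $\mathcal{U}$. The converse is the heart of the matter: given an open cover $\mathcal{V}$ of $X^n$, consider the family $\mathcal{U}'$ of all open $U\subseteq X$ such that $U^n$ lies in a finite union of members of $\mathcal{V}$. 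The key combinatorial lemma is that for every finite $F\subseteq X$ the finite set $F^n\subseteq X^n$ is contained in a finite union $V^*$ of members of $\mathcal{V}$, and then one can produce an open $W\supseteq F$ with $W^n\subseteq V^*$ — take $W$ to be the union, over the points of $F$, of appropriate finite intersections of the coordinate factors of small boxes chosen around the points of $F^n$. Hence $\mathcal{U}'$ is an open $\omega$-cover of $X$, an $\omega$-subcover of size $\le\kappa$ expands (replacing each $U$ by its finite witnessing subfamily) to a subcover of $X^n$ of size $\le\kappa$.

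Composing the two steps and letting $\kappa$ range proves $t(C_p(X))=\sup\{l(X^n):n\in\mathbb{N}\}$: with $\kappa=\sup_n l(X^n)$ Step 2 then Step 1 give $t(C_p(X))\le\kappa$, and with $\kappa=t(C_p(X))$ Step 1 then Step 2 give $l(X^n)\le\kappa$ for all $n$. I expect the main obstacle to be the converse direction of Step 2 — specifically the realization that one must allow \emph{finite unions} of members of $\mathcal{V}$ in the definition of $\mathcal{U}'$ (a single member will not do), together with the box lemma that manufactures the cube $W^n$ around $F^n$; by contrast the $C_p$-side constructions in Step 1 are routine once complete regularity is invoked, and the degenerate cases (trivial covers, the situation where $X^n$ itself is covered by finitely many members of $\mathcal{V}$, or $X$ finite) are disposed of separately.
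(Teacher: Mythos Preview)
Your argument is correct and is essentially the standard proof of the Arhangel'skii--Pytkeev theorem. Note, however, that the paper does \emph{not} prove this statement: it is quoted as a known result (with a reference to Arhangel'skii's monograph) and then invoked as a black box in the proof of the subsequent theorem, the analogue $t(\mathbb{B}(X))=\sup_n l(X^n_{\aleph_0})$ for spaces in the class $\mathbb{B}$. There the Arhangel'skii--Pytkeev theorem is used only for the easy inequality $t(\mathbb{B}(X))\le\sup_n l(X^n_{\aleph_0})$, via the inclusion $\mathbb{B}(X)\subseteq C_p(X_{\aleph_0})$; for the reverse inequality the paper gives a direct construction with characteristic functions of finite unions of zero-sets, which in spirit parallels your Step~1 $(\Rightarrow)$ but replaces the Tychonoff separators by the characteristic functions available in $\mathbb{B}(X)$. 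So there is nothing to compare your proof against in the paper itself; what you have written is the classical two-step reduction (tightness $\leftrightarrow$ $\omega$-cover extraction $\leftrightarrow$ Lindel\"of powers), and the box lemma you isolate in Step~2 is indeed the one nontrivial point.
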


We prove an analogue of Theorem P1 and the Arhangel'skii-Pytkeev
Theorem for a space $\mathbb{B}(X)\in \mathbb{B}$.

\begin{theorem} \label{lem1} $t(\mathbb{B}(X))=l^*(X_{\aleph_0})$.
\end{theorem}

\begin{proof} Since $t(C_p(Y))=l^*(Y)$ for a space
$Y$ (the Arhangel'skii-Pytkeev Theorem) and
$\mathbb{B}(X)\subseteq C(X_{\aleph_0})$, then
$t(\mathbb{B}(X))\leq l^*(X_{\aleph_0})$.

Fix $n\in \mathbb{N}$. Assume that $\eta$ is an open cover of
$X^n_{\aleph_0}$. Clearly that, whenever $V\in \eta$ and
$x=(x_1,...,x_n)\in V$ there exists $W_x=\prod\limits_{i=1}^n
\{V_{x_i}$ : $V_{x_i}$ is an open in $X_{\aleph_0}$ and $x_i\in
V_{x_i}\}$ such that $x\in W_x\subset V$. Then we can consider the
cover $\mu=\{W_x : x\in X^n\}$ of $X_{\aleph_0}^n$ such that $\mu$
is a refinement of $\eta$.

For each $x=(x_1,...,x_n)\in X^n$ denote
$\widetilde{x}=\{x_1,...,x_n\}\subset X$.

Let $m\in \mathbb{N}$, $z=(z_1,...,z_m)\in X^m$.

Fix $F(z_i)\in Z(X)$ such that $z_i\in F(z_i)$ ($1\leq i \leq m$)
and if $\widetilde{x}\subset \widetilde{z}$ (i.e.
$x=(z_{i_1},...,z_{i_n})$) then $F(z_{i_k})\subset V_{z_{i_k}}$
($k=1,...,n$).

Let $f_z$ be the characteristic function of $\bigcup \{F(z_i):
1\leq i\leq m \}$. The symbol $\bf{1}$ stands for the constant
function to $1$. Note that $F=\{f_z: z\in \bigcup\{X^m: 1\leq m <
\omega \}\}\subset \mathbb{B}(X)$ and ${\bf 1}\in \overline{F}$.
Then there exists $F'\subset F$ such that ${\bf 1}\in
\overline{F'}$, $|F'|\leq \tau=t(\mathbb{B}(X))$. Then there is
$A\subset X^m$
 such that  $|A|\leq \tau$ and $F'=\{f_z\in F:
z\in A \}$. Let $Y=\{y\in X^n : \widetilde{y}\subset
\widetilde{z}, z\in A \}$. Clearly that $|Y|\leq \tau$.

 We claim that $\{W_y : y\in Y\}\subset \mu$ is a cover of $X^n$. Let
$x\in X^n$. Then $W=\{f\in \mathbb{B}(X): f(t)>0$ for all $t\in
\widetilde{x}\}$ is a neighborhood of ${\bf 1}$. There is an
$f_z\in F'\bigcap W$. We have $\widetilde{x}\subset
\bigcup\limits_{i=1}^m F(z_i)$. Let $x_k\in F(z_{i_k})$ for $1\leq
k\leq n$, $y=(z_{i_1},...,z_{i_n})$. Then $y\in Y$ and $x\in
\prod\{F(z_{i_k}): 1\leq k \leq n\}\subset \prod\{V_{y_k} : 1\leq
k \leq n\}=W_y$.

\end{proof}

\begin{corollary} $t(B_{\alpha}(X))=t([C(X)]'_{\omega_0})=t(C_p(X_{\aleph_0}))=l^*(X_{\aleph_0})$ ($\alpha>0$).
\end{corollary}

Recall that a space is said to be scattered if every nonempty
subspace of it has an isolated point.

Note that if $X$ is scattered, then $l(X)=l(X_{\aleph_0})$
\cite{leri}. Then we have the following result.

\begin{corollary} If $X$ is scattered, then $t(\mathbb{B}(X))=l^*(X)$.

\end{corollary}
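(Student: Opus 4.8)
The plan is to deduce the corollary directly from Theorem \ref{lem1}, which asserts $t(\mathbb{B}(X))=\sup\{l(X_{\aleph_0}^n): n\in \mathbb{N}\}$, together with the cited fact that $l(Y)=l(Y_{\aleph_0})$ whenever $Y$ is scattered. So it suffices to verify two elementary reductions: first, that $X^n$ is scattered whenever $X$ is; and second, that the $G_\delta$-modification commutes with finite products, i.e. $(X^n)_{\aleph_0}=(X_{\aleph_0})^n$. Granting both, for each $n\in\mathbb{N}$ one has $l(X_{\aleph_0}^n)=l\bigl((X^n)_{\aleph_0}\bigr)=l(X^n)$, the last equality by applying the scattered fact to the scattered space $X^n$; taking suprema over $n$ then gives the claim.

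For the first reduction I would induct on $n$, so the crux is that the product of two scattered spaces is scattered. Given a nonempty $A\subseteq X\times X$, project it to the first coordinate: scatteredness of $X$ yields a point $a$ isolated in $\pi_1(A)$, witnessed by an open $U\ni a$ in $X$ with $U\cap\pi_1(A)=\{a\}$. The slice $A\cap(\{a\}\times X)$ is a nonempty subspace of a homeomorphic copy of $X$, hence has an isolated point $(a,b)$, witnessed by an open $V\ni b$ in $X$ with $A\cap(\{a\}\times V)=\{(a,b)\}$. Then $(U\times V)\cap A=\{(a,b)\}$, so $(a,b)$ is isolated in $A$; thus every nonempty subset of $X\times X$ has an isolated point.

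For the second reduction, the inclusion $(X_{\aleph_0})^n\subseteq(X^n)_{\aleph_0}$ is immediate, since a finite product of $G_\delta$-subsets of $X$ is a $G_\delta$-subset of $X^n$. For the reverse, a basic $G_\delta$-neighborhood of a point $x=(x_1,\dots,x_n)$ in $X^n$ has the form $\bigcap_{k}\prod_{i=1}^n U_i^k$ with each $U_i^k$ open in $X$ and $x_i\in U_i^k$; this set contains $\prod_{i=1}^n\bigl(\bigcap_k U_i^k\bigr)$, a basic open neighborhood of $x$ in $(X_{\aleph_0})^n$. Hence the two topologies on $X^n$ coincide.

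I expect no genuine obstacle here: the only point needing a moment's care is the slice argument for scatteredness of a product, and its verification is short. Combining Theorem \ref{lem1} with these two observations completes the proof.
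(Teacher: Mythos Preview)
Your proof is correct and follows the same route as the paper: the paper simply records the Levy--Rice fact $l(X)=l(X_{\aleph_0})$ for scattered $X$ and leaves the corollary as an immediate consequence of Theorem~\ref{lem1}, while you have made explicit the two implicit reductions (that finite powers of scattered spaces are scattered, and that $(X^n)_{\aleph_0}=(X_{\aleph_0})^n$) needed to pass from the $n=1$ statement to all $n$. Nothing further is required.
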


Note that $l(X_{\aleph_0})=\omega_0$ implies that
$l(X^n_{\aleph_0})=\omega_0$ \cite{mir}.

\begin{corollary} $t(\mathbb{B}(X))=t(C_p(X_{\aleph_0}))=\omega_0$ if and only
if  $X_{\aleph_0}$ is Lindel$\ddot{o}$f.

\end{corollary}

\section{Hereditary density}

The following result is well known in $C_p$-theory \cite{arh2}
(for $hd(C_p(X))=\omega_0$ see \cite{vel}).

\begin{theorem} $hd(C_p(X))=hl^*(X)$.
\end{theorem}

We prove an analogue of this theorem and Theorem P2 for a space in
class $\mathbb{B}$.

\begin{theorem} $hd(\mathbb{B}(X))=hl^*(X_{\aleph_0})$.
\end{theorem}

\begin{proof} Since $hd(C_p(Y))=hl^*(Y)$ for a space $Y$ and $\mathbb{B}(X)\subseteq
C_p(X_{\aleph_0})$, then $hd(\mathbb{B}(X))\leq
hl^*(X_{\aleph_0})$.

Let $Y$ be a subspace of $X^n_{\aleph_0}$, we consider the family
$\mu=\{W_y : y\in Y\}$ of sets in $Y$ where
$W_y=\prod\limits_{i=1}^n \{Z_{y_i}$ : $Z_{y_i}$ is a zero-set in
$X$, $y_i\in Z_{y_i}\}$ such that $Y\subseteq \bigcup \mu$ and
$|\mu|=l(Y)$. Then $\gamma=\{W_y\cap Y: W_y\in \mu \}$ is an open
cover of $Y$.

Suppose $A=\{f_{Z_y} : Z_y=\bigcup\limits_{i=1}^n Z_{y_i},
W_{y}\cap Y\in \gamma$ and $f_{Z_y}$ is the characteristic
function of $Z_y\}$. Note that $A\subset \mathbb{B}(X)$. Let
$D\subset A$ such that $|D|<|A|\leq |\gamma|$. Then the family
$\eta=\{W_y\cap Y : f_{Z_y}\in D\}$ is not a cover of $Y$. Hence,
there are $W_y\in \mu$ and $y\in Y$ such that $W_y\cap Y \notin
\eta$ and $y=(y_1,...,y_n)\in W_y\cap (Y\setminus \bigcup \eta)$.
Then $\{h : |h(y_i)-f_{Z_y}(y_i)|<1, 1\leq i \leq n \}\cap
D=\emptyset$. It follows that $D$ is not dense in $A$, and
$d(A)\geq l(Y)$.

\end{proof}

\begin{corollary} $hd(B_{\alpha}(X))=hd([C(X)]'_{\omega_0})=hd(C_p(X_{\aleph_0}))=hl^*(X_{\aleph_0})$ ($\alpha>0$).
\end{corollary}

 Note that if $X$ is scattered, then
$hl(X)=hl(X_{\aleph_0})$ \cite{leri}. Then we have the following
result.

\begin{corollary} If $X$ is scattered, then $hd(\mathbb{B}(X))=hd(C_p(X_{\aleph_0}))=hl^*(X)$.

\end{corollary}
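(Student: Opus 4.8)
The plan is to derive this from the theorem just proved, $hd(\mathbb{B}(X))=\sup_{n\in\mathbb{N}}hl(X^n_{\aleph_0})$, together with the preceding corollary, which already gives $hd(C_p(X_{\aleph_0}))=\sup_{n\in\mathbb{N}}hl(X^n_{\aleph_0})$. So the whole statement reduces to the single claim that $hl(X^n_{\aleph_0})=hl(X^n)$ for every $n\in\mathbb{N}$; once that is established, taking suprema over $n$ finishes the proof.

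To prove that claim I would first record that a finite power of a scattered space is scattered: given a nonempty $A\subseteq X^n$, pick $a_1\in X$ isolated in the projection of $A$ onto the first coordinate, replace $A$ by the nonempty set $A\cap(\{a_1\}\times X^{n-1})$ and iterate; after $n$ steps one reaches a point isolated in $A$. Hence $X^n$ is scattered, and the cited fact that $hl(Y)=hl(Y_{\aleph_0})$ for scattered $Y$ (see \cite{leri}) applies with $Y=X^n$ and gives $hl(X^n)=hl((X^n)_{\aleph_0})$, where $(X^n)_{\aleph_0}$ denotes the $G_{\delta}$-modification of the product space $X^n$.

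It then remains to identify $(X^n)_{\aleph_0}$ with the topology $X^n_{\aleph_0}$ used throughout this section, namely the one whose base consists of the boxes $\prod_{i=1}^n U_i$ with each $U_i$ a $G_{\delta}$ (equivalently, $X_{\aleph_0}$-open) subset of $X$. These two topologies coincide: each such box is a $G_{\delta}$ of $X^n$ (write $U_i=\bigcap_k U_{i,k}$ with $U_{i,k}$ open and note $\prod_i U_i=\bigcap_{(k_1,\dots,k_n)}\prod_i U_{i,k_i}$), and conversely, if $p$ lies in a $G_{\delta}$ subset $\bigcap_k W_k$ of $X^n$, then choosing for each $k$ a basic open box $B_k$ with $p\in B_k\subseteq W_k$, the set $\bigcap_k B_k$ is again a box of $G_{\delta}$ sets, contains $p$, and is contained in $\bigcap_k W_k$. (If one prefers to sidestep this identification, the same conclusion follows from the fact that $hl$ never decreases under refinement of the topology — an open cover in a coarser topology is still an open cover in a finer one — which squeezes $hl(X^n)\le hl(X^n_{\aleph_0})\le hl((X^n)_{\aleph_0})=hl(X^n)$.) Either way $hl(X^n_{\aleph_0})=hl(X^n)$ for all $n$, and the corollary follows.

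I expect the only step requiring genuine care to be this last one — the comparison of the two $G_{\delta}$-type topologies on $X^n$, or equivalently the monotonicity remark that replaces it. The scatteredness of finite powers and the passage through suprema are routine, and the two substantive inputs, the main theorem of this section and the scatteredness lemma of \cite{leri}, are invoked as black boxes.
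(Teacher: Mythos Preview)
Your argument is correct and is essentially the natural completion of the paper's own justification, which consists solely of the one-line remark preceding the corollary that $hl(X)=hl(X_{\aleph_0})$ for scattered $X$ (citing \cite{leri}). The paper leaves implicit exactly the two points you spell out --- that finite powers of scattered spaces are scattered, and that $(X^n)_{\aleph_0}$ and $(X_{\aleph_0})^n$ agree --- so your write-up is in fact more complete than the original.
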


\section{Hereditary Lindel$\ddot{o}$f degree}

The following result is well known in $C_p$-theory \cite{arh2,vel}

\begin{theorem}\label{hl} If $ind(X)=0$, then $hl(C_p(X))=hd^*(X)$.
\end{theorem}

Note that $ind(X_{\aleph_0})=0$ for any space $X$. Then we have
the following theorem.

\begin{theorem} $hl(\mathbb{B}(X))=hd^*(X_{\aleph_0})$.
\end{theorem}

\begin{proof} Since $ind(X_{\aleph_0})=0$ and $\mathbb{B}(X)\subseteq
C_p(X_{\aleph_0})$, then, by Theorem \ref{hl},
$hl(\mathbb{B}(X))\leq hd^*(X_{\aleph_0})$.

First we prove an auxiliary proposition.

\begin{proposition}\label{pr1} If $Y\subset X^n$ is such that whenever
$y=(y_1,...,y_n)\in Y$ and $y_i\neq y_j$ for $i\neq j$, then
$hl(\mathbb{B}(X))\geq d(Y_{\aleph_0})$.
\end{proposition}

\begin{proof}
For each $y=(y_1,...,y_n)\in Y$ we fix a local base $\beta(y)$ at
$y$ in $X^n_{\aleph_0}$ the following of the form $\beta(y)=\{
V=\prod\limits_{i=1}^n V_i : y_i\in V_i\in Z(X), V_i\cap
V_j=\emptyset$ for $i\neq j \}$. Let
$$f_V(x)= \left\{
\begin{array}{rcl}
i, \, \,  \, \, \, \,\, \,  \, \, \, \, \,  \, \, \,\, \,\, \,  \, \, if   \, \, \, \,  x\in V_i,\\
0, \,  \, \, \, if    \, \,   x\in X\setminus \bigcup\limits_{i=1}^n V_i,\\
\end{array}
\right.
$$
and $A=\{f_V : V\in \beta(y), y\in Y\}$. Clearly that $A\subset
\mathbb{B}(X)$. Let $U(y)=\{f\in \mathbb{B}(X): |f(y_i)-i|<1\}$
and $\gamma=\{U(y): y\in Y\}$. Then $\gamma$ is a cover of $A$.
There is a subcover $\gamma'\subseteq \gamma$ such that
$|\gamma'|=l(A)\leq hl(\mathbb{B}(X))$. Consider $S=\{y\in Y:
U(y)\in \gamma'\}$. Note that $|S|\leq|\gamma'|$. We claim that
$S$ is dense in $Y_{\aleph_0}$. Fix $z=(z_1,...,z_n)\in Y$, $V\in
\beta(z)$. Then $f_V\in A$ and there is $U(y)\in \gamma'$ such
that $f_V\in U(y)$. It follows that $y\in V\cap S$ and $S$ is
dense in $Y_{\aleph_0}$.
\end{proof}

Now, by indiction on $n$, we claim that $hl(\mathbb{B}(X))\geq
hd(X^n_{\aleph_0})$.

For $n=1$ by Proposition \ref{pr1}.

Suppose that $hl(\mathbb{B}(X))\geq hd(X^k_{\aleph_0})$ for $k<n$.
Note that $X^n_{ij}=\{(x_1,...,x_n): x_i=x_j\}$ for $i\neq j$ is
homeomorphic to the space $X^{n-1}$. Set $D=\bigcup \{X^n_{ij} :
1\leq i\neq j\leq n \}$. Let $Z\subseteq X^n$. Then, by
Proposition \ref{pr1}, $d(Z_{\aleph_0}\setminus D)\leq
hl(\mathbb{B}(X))$ and, by the inductive hypothesis,
$d(Z_{\aleph_0})\leq d(Z_{\aleph_0}\setminus D)+\sum\limits_{1\leq
i\neq j\leq n} d(Z_{\aleph_0}\cap X^n_{ij})\leq
hl(\mathbb{B}(X))$.

\end{proof}

\begin{corollary}
$hl(B_{\alpha}(X))=hl([C(X)]'_{\omega_0})=hl(C_p(X_{\aleph_0}))=
hd^*(X_{\aleph_0})$ \\($\alpha>0$).
\end{corollary}

\section{Spread}

The well-known the following result in $C_p$-theory \cite{arh2}.

\begin{theorem}\label{h2} If $ind(X)=0$, then $s(C_p(X))=s^*(X)$.
\end{theorem}

Since $ind(X_{\aleph_0})=0$ for any space $X$, we have the
following theorem for the spread $s(\mathbb{B}(X))$ of a space
$\mathbb{B}(X)$ from the class $\mathbb{B}$.

\begin{theorem}
$s(\mathbb{B}(X))=s^*(X_{\aleph_0})$.
\end{theorem}

\begin{proof} Since $ind(X_{\aleph_0})=0$ and $\mathbb{B}(X)\subseteq
C_p(X_{\aleph_0})$, then, by Theorem \ref{h2},
$s(\mathbb{B}(X))\leq s^*(X_{\aleph_0})$.

First we prove an auxiliary proposition.

\begin{proposition}\label{pr21} Assume that $Y\subset X^n$ is such that whenever
$y=(y_1,...,y_n)\in Y$ and $y_i\neq y_j$ for $i\neq j$. If $Y$ is
discrete in $X^n_{\aleph_0}$, then $|Y|\leq s(\mathbb{B}(X))$.
\end{proposition}

\begin{proof}

For each $y=(y_1,...,y_n)\in Y$ we fix $V=\prod\limits_{i=1}^n
V_i$  such that $V\cap Y=\{y\}$, $y_i\in V_i\in Z(X), V_i\cap
V_j=\emptyset$ for $i\neq j$. Let
$$f_V(x)= \left\{
\begin{array}{rcl}
i, \, \,  \, \, \, \,\, \,  \, \, \, \, \,  \, \, \,\, \,\, \,  \, \, if   \, \, \, \,  x\in V_i,\\
0, \,  \, \, \, if    \, \,   x\in X\setminus \bigcup\limits_{i=1}^n V_i,\\
\end{array}
\right.
$$ and $A=\{f_V :  y\in Y\}$. Clearly that $A\subset \mathbb{B}(X)$
and $|A|=|Y|$. We claim that $A$ is discrete. If $f_U\in (f_V,
y,1)\cap A$, then $|f_U(y_i)-i|<1$ for $1\leq i \leq n$. Hence,
$y=(y_1,...,y_n)\in U$. It follows that $U=V$ and $f_U=f_V$.
\end{proof}

Now, by indiction on $n$, we claim that $s(\mathbb{B}(X))\geq
s(X^n_{\aleph_0})$.

For $n=1$ by Proposition \ref{pr21}.

Suppose that $s(\mathbb{B}(X))\geq s(X^k_{\aleph_0})$ for $k<n$.
Note that $X^n=\widetilde{X}^n\cup D$, where $D=\bigcup \{X^n_{ij}
: 1\leq i\neq j\leq n \}$, $X^n_{ij}=\{(x_1,...,x_n): x_i=x_j\}$,
$\widetilde{X}^n=X^n\setminus D$. Let $Y$ be discrete in
$X^n_{\aleph_0}$. Put $Y_1=Y\cap \widetilde{X}^n$, $Y_2=Y\cap D$,
then $Y=Y_1\cap Y_2$. By Proposition \ref{pr21}, $|Y_1|\leq
s(\mathbb{B}(X))$. If $i\neq j$, then $X^n_{ij}$ is homeomorphic
to the space $X^{n-1}$ and, hence, $|Y_2|\leq
s(X^{n-1}_{\aleph_0})$. By the inductive hypothesis, $|Y_2|\leq
s(\mathbb{B}(X))$. Note that $|Y|=|Y_1|+|Y_2|$. It follows that
$s(\mathbb{B}(X))\geq s^*(X_{\aleph_0})$.

\end{proof}

\begin{corollary}
$s(B_{\alpha}(X))=s([C(X)]'_{\omega_0})=s(C_p(X_{\aleph_0}))=
s^*(X_{\aleph_0})$ ($\alpha>0$).
\end{corollary}

\section{Modification of Theorem P5.}

Let $\kappa$ be an infinite cardinal. A space $C$ is said to be
{\it $\kappa$-initially compact} (see \cite{smir}) if every open
cover $\mathcal{V}$ of $C$ with $|\mathcal{V}|\leq \kappa$ has a
finite subcover. A space $E$ is a $\kappa$-$k$
($\kappa$-$k$-space), if whenever the subspace $A$ is non-closed
in $E$, there is a $\kappa$-initially compact subspace $C$ of $E$
with $C\cap A$ non-closed in $C$ (\cite{ger1}).

In 1982, Pytkeev \cite{py} and Gerlits \cite{ger1} independently
 proved the following result.

\begin{theorem}(Pytkeev-Gerlits) \label{th80} For a space $X$, the following are
equivalent:

\begin{enumerate}

\item $C_p(X)$ is Fr$\acute{e}$chet-Urysohn;

\item $C_p(X)$ is sequential;

\item $C_p(X)$ is a $k$-space.

\end{enumerate}

\end{theorem}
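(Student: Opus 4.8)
The plan is to prove the cyclic chain of implications
$(1)\Rightarrow(2)\Rightarrow(3)\Rightarrow(1)$, since the two easy implications $(1)\Rightarrow(2)$ and $(2)\Rightarrow(3)$ hold in every topological space (a Fr\'echet--Urysohn space is sequential, and a sequential space is a $k$-space), so the entire content is in establishing $(3)\Rightarrow(1)$. Thus, I would assume that $C_p(X)$ is a $k$-space and aim to show that every point in the closure of a set $A\subseteq C_p(X)$ is the limit of a sequence from $A$; by the standard translation-homogeneity of $C_p(X)$ it suffices to treat the zero function $\mathbf{0}\in\overline{A}$, and after replacing $A$ by a suitable subset one may assume $A$ is countable-free of $\mathbf{0}$ and work with basic neighborhoods determined by finite subsets of $X$ and rational $\varepsilon$.

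The first key step is the \emph{combinatorial reformulation}: $C_p(X)$ being Fr\'echet--Urysohn is equivalent to $X$ having the property that every $\omega$-cover of $X$ (by cozero, equivalently open, sets) contains a $\gamma$-subcover --- i.e. $X$ satisfies ${\Omega\choose\Gamma}$, the Gerlits--Nagy $\gamma$-property. This is the bridge used implicitly in Theorem P5 of Pestriakov and is the form in which the result should be recorded here. So the real goal becomes: if $C_p(X)$ is a $k$-space, then $X$ has the $\gamma$-property. The second key step is to produce, from an arbitrary $\omega$-cover $\mathcal{U}=\{U_n:n\in\mathbb{N}\}$ of $X$, an associated subset $A$ of $C_p(X)$ (for instance functions $f_n$ built from the $U_n$, supported off $U_n$ or peaking on finite sets outside $U_n$) such that $\mathbf{0}\in\overline{A}$ precisely because $\mathcal{U}$ is an $\omega$-cover, and such that a sequence from $A$ converging to $\mathbf{0}$ corresponds exactly to a $\gamma$-subcover of $\mathcal{U}$. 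The third step invokes the $k$-space hypothesis: if $\mathbf{0}\notin\overline{B}$ for every \emph{countable} $B\subseteq A$ one shows $A\cup\{\mathbf{0}\}$ (or a relevant superset) is closed, hence $A$ would be closed, contradicting $\mathbf{0}\in\overline A$; the delicate point is that the $k$-space property only gives closedness after intersecting with compact sets, so one must exhibit the relevant compact subsets of $C_p(X)$ --- these are the Ascoli-type compacta, pointwise-bounded equicontinuous-on-finite-sets families --- and check that $A$ meets each such compactum in a closed set when no countable subfamily clusters at $\mathbf{0}$.

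The main obstacle I expect is precisely this last step: transferring the $k$-space condition (closedness tested against compact subsets of the function space) into a statement about \emph{countable} subfamilies of the cover. Compact subsets of $C_p(X)$ are complicated, and one needs the Gerlits--Nagy observation that for the specifically constructed family $A$, any compact $K\subseteq C_p(X)$ meets $A$ in a set whose closure behavior is controlled by finitely-determined data, so that $K\cap\overline A=K\cap A$ unless some sequence already witnesses $\mathbf{0}\in\overline{A}$. Handling the diagonalization that upgrades ``some countable subfamily clusters'' to ``some sequence converges'' --- and simultaneously yields the $\gamma$-subcover rather than a mere $\omega$-subcover --- is the technical heart; this is where the argument genuinely uses that we are in $C_p(X)$ and not in an arbitrary $k$-space, and it is the step I would write out in full detail while treating the two trivial implications in a single line.
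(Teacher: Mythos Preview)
The paper does not actually prove this theorem: it is quoted as a background result, attributed to Pytkeev \cite{py} and Gerlits \cite{ger1}, and no proof is given in the text. Consequently there is no ``paper's own proof'' to compare your proposal against.

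For what it is worth, your outline is broadly in line with the original arguments of Gerlits in \cite{ger1}: the trivial chain $(1)\Rightarrow(2)\Rightarrow(3)$, followed by a reduction of $(3)\Rightarrow(1)$ to the combinatorial $\gamma$-property of $X$ via a carefully constructed family of functions associated to an $\omega$-cover. Gerlits' route, however, is more structured than your sketch suggests: he first isolates the auxiliary property $(\varphi)$, proves that $(\gamma)\Leftrightarrow(\varphi)\wedge(\epsilon)$, and then shows separately that the $k$-space (in fact $\omega$-$k$-space) assumption on $C_p(X)$ forces $(\varphi)$, while countable tightness (hence $(\epsilon)$) is obtained from the stronger $\omega_1$-$k$ hypothesis. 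Your third step, in which you try to extract directly from the $k$-space hypothesis that some \emph{countable} subfamily of $A$ clusters at $\mathbf{0}$, conflates these two pieces; the passage from ``$K\cap A$ is not closed in $K$ for some compact $K$'' to a countable $\gamma$-subcover genuinely requires the intermediate decomposition into $(\varphi)$ and $(\epsilon)$, and the compacta that arise are not Ascoli-type equicontinuous families (there is no metric or uniformity in play) but rather arbitrary countably compact subsets of $C_p(X)$, whose only usable feature is that each projection to $\mathbb{R}$ is bounded. If you were to write this out in full you would want to follow the $(\varphi)$/$(\epsilon)$ split rather than attempt the direct argument you describe.
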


 Gerlits and Nagy defined three properties \cite{gn,ger1}:

$\bullet$ the property $(\gamma)$:  for every open $\omega$-cover
$\mathcal{V}$ of $X$ there exists a sequence $G_n\in \mathcal{V}$
such that $\{G_n : n\in \omega\}$ is a $\gamma$-cover of $X$
(${\Omega\choose\Gamma}$ in terminology of selection principles).

$\bullet$ the property $(\epsilon)$ is one of the following
equivalent properties:

(a) $X^n$ is Lindel$\ddot{o}$f for all $n\in \omega$
($l^*(X)=\omega_0$);

(b) Every open $\omega$-cover of $X$ contains a countable
$\omega$-subcover;

(c) $t(C_p(X))=\omega_0$.

$\bullet$ the property $(\varphi)$: whenever
$\mathcal{U}=\bigcup\{\mathcal{U}_n : n\in \mathbb{N} \}$ is an
open $\omega$-cover of $X$, $\mathcal{U}_n\subset
\mathcal{U}_{n+1}$ ($n\in \mathbb{N}$), there exists a sequence
$X_n\subset X$ such that $\underline{\lim} X_n=X$ and $X_n$ is
$\omega$-covered by $\mathcal{U}_n$.

Gerlits proved that a space $X$ has the property $(\gamma)$ if and
only if
 it has both $(\varphi)$ and $(\epsilon)$ (Theorem 1 in \cite{ger1}).

Let $X$ be a topological space, and $x\in X$. A subset $A$ of $X$
{\it converges} to $x$, $x=\lim A$, if $A$ is infinite, $x\notin
A$, and for each neighborhood $U$ of $x$, $A\setminus U$ is
finite. Consider the following collection:

$\bullet$ $\Omega_x=\{A\subseteq X : x\in \overline{A}\setminus
A\}$;

$\bullet$ $\Gamma_x=\{A\subseteq X : x=\lim A\}$.

\medskip

In 1982 Gerlits and Nagy \cite{gn} proved

\begin{theorem}(Gerlits-Nagy) \label{th7} For a space $X$, the following are
equivalent:

\begin{enumerate}

\item $C_p(X)$ satisfies $S_{1}(\Omega_{\bf0}, \Gamma_{\bf0})$;

\item $C_p(X)$ is Fr$\acute{e}$chet-Urysohn;

\item $X$ satisfies $S_{1}(\Omega, \Gamma)$;

\item $X$ has the property ($\gamma$), i.e. $X$ satisfies
${\Omega\choose\Gamma}$.

\end{enumerate}

\end{theorem}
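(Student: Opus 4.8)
The plan is to follow the Gerlits--Nagy strategy and establish the cycle $(1)\Rightarrow(2)\Rightarrow(4)\Rightarrow(3)\Rightarrow(1)$, in which the first, second and last implications are elementary and $(4)\Rightarrow(3)$ carries essentially all the weight. Everything is driven by a dictionary between covers of $X$ and the behaviour of $C_p(X)$ near the zero function ${\bf 0}$; since $C_p(X)$ is a topological vector space it is homogeneous, so Fr\'echet--Urysohnness and convergence may be tested at ${\bf 0}$. To an open $\omega$-cover $\mathcal{U}$ of $X$ I attach $A(\mathcal{U})=\{f_{U,F}: U\in\mathcal{U},\ F\in[U]^{<\omega}\}$, where by complete regularity $f_{U,F}\in C(X)$ is chosen with $0\le f_{U,F}\le 1$, equal to $0$ on $F$ and equal to $1$ on $X\setminus U$; conversely, to $A\subseteq C_p(X)$ with ${\bf 0}\in\overline{A}$ I attach, for each $n$, the open $\omega$-cover $\mathcal{U}_n(A)=\{\{x: |f(x)|<1/n\}: f\in A\}$. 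Two observations power everything: $A(\mathcal{U})\in\Omega_{\bf 0}$ always (it has ${\bf 0}$ in its closure since $\mathcal{U}$ is an $\omega$-cover, and ${\bf 0}\notin A(\mathcal{U})$ since each $f_{U,F}$ equals $1$ off the proper subset $U$); and if $g_j\to{\bf 0}$ pointwise with $g_j$ equal to $1$ off a set $U_j\subsetneq X$, then every $x$ lies in all but finitely many $U_j$, so $\{U_j:j\in\mathbb{N}\}$ is a $\gamma$-cover (infiniteness being routine).

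With this dictionary the easy implications are short. For $(1)\Rightarrow(2)$: given $A$ with ${\bf 0}\in\overline{A}\setminus A$, apply $S_1(\Omega_{\bf 0},\Gamma_{\bf 0})$ to the constant sequence $A_n=A$ and use homogeneity. For $(2)\Rightarrow(4)$: given an open $\omega$-cover $\mathcal{U}$, the set $A(\mathcal{U})$ has ${\bf 0}$ in its closure, so Fr\'echet--Urysohnness yields a sequence $g_j=f_{U_j,F_j}\to{\bf 0}$, and by the second observation $\{U_j:j\in\mathbb{N}\}$ is a $\gamma$-subcover of $\mathcal{U}$, which is exactly ${\Omega\choose\Gamma}$. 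For $(3)\Rightarrow(1)$: given $A_n\in\Omega_{\bf 0}$, feed the sequence $(\mathcal{U}_n(A_n))_n$ of $\omega$-covers into $S_1(\Omega,\Gamma)$, obtaining $V_n=\{x: |g_n(x)|<1/n\}\in\mathcal{U}_n(A_n)$ with $g_n\in A_n$ and $\{V_n:n\}$ a $\gamma$-cover; then $|g_n(x)|<1/n$ for all large $n$, so $g_n\to{\bf 0}$, and $\{g_n:n\}\in\Gamma_{\bf 0}$ (it is infinite since an infinitely repeated value would be ${\bf 0}\notin A_n$). The implication $(3)\Rightarrow(4)$ is in any case immediate from the fact, noted in the introduction, that $S_1(\mathcal{A},\mathcal{B})$ implies ${\mathcal{A}\choose\mathcal{B}}$.

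The remaining link $(4)\Rightarrow(3)$ --- that the one-shot property ${\Omega\choose\Gamma}$ already upgrades to the sequential selection $S_1(\Omega,\Gamma)$ --- is the core of the theorem and the step I expect to be the real obstacle, because the naive diagonalizations fail: knowing, for each level $n$, that $g^n_k(x)\to 0$ as $k\to\infty$ for every $x$ does not let one pick one $g^n$ per level with $g^n\to{\bf 0}$, since the rate depends on the uncountably many $x$. The plan here is first to reduce --- ${\Omega\choose\Gamma}$ forces every $\omega$-cover to have a countable $\omega$-subcover and every $\gamma$-cover to contain a countable one, so one may assume the given covers are countable families $\mathcal{U}_n=\{U^n_k:k\in\mathbb{N}\}$, increasing in $k$ (pass to partial unions), and, after replacing $\mathcal{U}_{n+1}$ by $\{U^{n+1}_k\cap U^n_k:k\}$, with $U^{n+1}_k\subseteq U^n_k$ for all $n,k$. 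One then builds, out of finite ``approximations to a selection'' (the sets $\bigcap_{i\le n}U^i_{k_i}$, suitably organized), a single open $\omega$-cover $\mathcal{W}$, applies ${\Omega\choose\Gamma}$ to it once, and reads off the coordinates of the resulting $\gamma$-subcover to obtain $U^n_{k_n}\in\mathcal{U}_n$ with $\{U^n_{k_n}:n\}$ a $\gamma$-cover. The delicate and decisive point --- exactly where a blind application of ${\Omega\choose\Gamma}$ is insufficient --- is to engineer $\mathcal{W}$ so that every one of its $\gamma$-subcovers is forced to be cofinal through all levels $n$, instead of being allowed to live inside the first finitely many $\mathcal{U}_1,\dots,\mathcal{U}_N$. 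Making this ``unbounded levels'' phenomenon work is the technical core; once it is in place the pointwise statement ``$x$ is eventually in $U^n_{k_n}$'' drops out, closing the cycle and yielding the equivalence of $(1)$--$(4)$.
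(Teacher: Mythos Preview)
The paper does not prove this theorem; it is quoted from Gerlits--Nagy \cite{gn} (and the companion decomposition $(\gamma)\Leftrightarrow(\varphi)\wedge(\epsilon)$ from \cite{ger1} is recorded just before it) and is invoked only as a tool in the proof of Theorem~\ref{th4}. So there is no argument in the paper to compare yours against; I can only assess the proposal on its own terms.

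Your implications $(1)\Rightarrow(2)$, $(2)\Rightarrow(4)$ and $(3)\Rightarrow(1)$ are correct, and the cover/function dictionary you set up is exactly the right one. The gap is $(4)\Rightarrow(3)$. You isolate the crux accurately --- a $\gamma$-subcover of the auxiliary $\mathcal{W}$ might sit inside finitely many levels $n$ --- but you then stop, writing that once the ``unbounded levels'' phenomenon is engineered ``the rest drops out''. That engineering \emph{is} the implication: nothing in the reductions you list forces cofinality, and you supply no mechanism for it, so as written the cycle is not closed. There is also a second, more concrete defect in the reductions themselves. Passing to partial unions (to make $U^n_k$ increasing in $k$) replaces $\mathcal{U}_n$ by a family whose members are finite unions from the original $\mathcal{U}_n$; a single selection $U^n_{k_n}$ from the modified family therefore pulls back only to the finite set $\{U^n_j:j\le k_n\}$ in the original, so even a fully successful execution of your plan would yield finite $\mathcal{F}_n\subseteq\mathcal{U}_n$ with $\bigcup_n\mathcal{F}_n$ a $\gamma$-cover rather than a one-per-level choice --- and upgrading such a finite selection to $S_1(\Omega,\Gamma)$ is again equivalent to $(4)\Rightarrow(3)$, so the reduction is circular. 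The arguments that actually close the loop --- Gerlits' passage through property $(\varphi)$, or a careful recursive thinning in which $(\gamma)$ is applied at each stage to an $\omega$-cover built from the previous $\gamma$-cover --- are absent from your outline.
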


In 1984, A.V. Arhangel'skii \cite{arh1} proved the following
theorem in the class of $P$-spaces.

\begin{theorem}\label{arh}(Arhangel'skii)  For a $P$-space $X$, the following
are equivalent:

\begin{enumerate}

\item $C_p(X)$ has countable tightness;

\item $C_p(X)$ is Fr$\acute{e}$chet-Urysohn;

\item $X$ is Lindel$\ddot{o}$f.

\end{enumerate}

\end{theorem}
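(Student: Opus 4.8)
The plan is to prove the cycle $(2)\Rightarrow(1)\Rightarrow(3)\Rightarrow(2)$. The implication $(2)\Rightarrow(1)$ is immediate, since a Fr\'echet--Urysohn space is countably tight. For $(1)\Rightarrow(3)$ I would invoke the Arhangel'skii--Pytkeev theorem: $t(C_p(X))=\sup\{l(X^n):n\in\mathbb{N}\}\ge l(X)$, so countable tightness of $C_p(X)$ forces $l(X)=\omega$, i.e.\ $X$ is Lindel\"of.

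The real content is $(3)\Rightarrow(2)$. By the Gerlits--Nagy Theorem~\ref{th7} it suffices to prove that a Lindel\"of $P$-space $X$ has property $(\gamma)$, i.e.\ satisfies ${\Omega\choose\Gamma}$. Two preliminary remarks: since $X$ is a $P$-space we have $X=X_{\aleph_0}$, hence $l(X^n)=\omega$ for every $n$ (this follows from $l(X)=\omega$; cf.\ \cite{mir}), so $X$ has property $(\epsilon)$ --- every open $\omega$-cover contains a countable $\omega$-subcover; and $X$ is zero-dimensional, because in a $P$-space every cozero-set is clopen and cozero-sets form a base. Now fix an open $\omega$-cover $\mathcal{U}$. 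By zero-dimensionality the family $\mathcal{V}_0$ of all clopen $V\ne X$ that are contained in some member of $\mathcal{U}$ is again an open $\omega$-cover refining $\mathcal{U}$, and by $(\epsilon)$ it has a countable $\omega$-subcover $\{V_n:n\in\mathbb{N}\}$.

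The heart of the argument is to bound the number of ``membership patterns''. For $x\in X$ put $I(x)=\{n:x\in V_n\}$ and $[x]=\bigcap_{n\in I(x)}V_n\cap\bigcap_{n\notin I(x)}(X\setminus V_n)$. Each $[x]$ is a countable intersection of clopen sets, hence \emph{open} because $X$ is a $P$-space; the distinct sets $[x]$ form a partition of $X$; and in a Lindel\"of space every cover by pairwise disjoint non-empty open sets is countable, so $\{I(x):x\in X\}$ is countable. Next, re-index $\{V_n\}$ as a sequence $(W_k)_{k\in\mathbb{N}}$ in which each $V_n$ occurs infinitely often; then every $\widetilde{I}(x)=\{k:x\in W_k\}$ is infinite, there are only countably many of them, and for each finite $F\subseteq X$ the set $\{k:F\subseteq W_k\}$ is infinite. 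Thus $\{\widetilde{I}(x):x\in X\}$ is a countable family of infinite subsets of $\mathbb{N}$ with the strong finite intersection property, so (closing it under finite intersections and diagonalizing) it has an infinite pseudo-intersection $A$. Then $A\setminus\widetilde{I}(x)$ is finite for every $x$, so $\{W_k:k\in A\}$ is a $\gamma$-cover; a short pigeonhole argument shows it has infinitely many distinct members (otherwise some $W_k$ would equal $X$), and replacing each $W_k$ by a member of $\mathcal{U}$ containing it yields an infinite $\gamma$-cover contained in $\mathcal{U}$. Hence $X$ satisfies ${\Omega\choose\Gamma}$ and, by Theorem~\ref{th7}, $C_p(X)$ is Fr\'echet--Urysohn.

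The step I expect to be the crux is the bound on the membership patterns: that is precisely where the two hypotheses are used together --- the $P$-space property makes the countably generated classes $[x]$ open, and Lindel\"ofness collapses the resulting open partition to a countable one. The surrounding steps (the passage to a clopen refinement, the ``repeat each set infinitely often'' device, which is needed both so that the pseudo-intersection exists and so that the final subfamily is genuinely infinite, and the pull-back along the refinement) are routine, but they do have to be carried out with some care.
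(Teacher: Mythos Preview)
The paper does not prove this theorem at all: it is stated as Arhangel'skii's result with a citation to \cite{arh1}, and the paper simply \emph{uses} it (in the proof of Theorem~\ref{th4}, implication $(7)\Rightarrow(1)$). So there is no ``paper's own proof'' to compare against.

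Your argument is correct and self-contained. The implications $(2)\Rightarrow(1)$ and $(1)\Rightarrow(3)$ are exactly the right one-liners. For $(3)\Rightarrow(2)$ your strategy --- show that a Lindel\"of $P$-space has property $(\gamma)$ and then invoke Gerlits--Nagy --- is standard and sound. The key step you flag is indeed the crux: the sets $[x]$ are open precisely because $X$ is a $P$-space, and Lindel\"ofness forces the open partition $\{[x]:x\in X\}$ to be countable. After that, the ``repeat each $V_n$ infinitely often'' trick guarantees that the countable family $\{\widetilde I(x):x\in X\}$ consists of infinite sets with the strong finite-intersection property, so a diagonal argument produces a pseudo-intersection $A$, and $\{W_k:k\in A\}$ is a $\gamma$-cover. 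Your pigeonhole remark is correct: if only finitely many distinct sets occurred among $\{W_k:k\in A\}$, one of them would contain every $x$ and hence equal $X$, contradicting $V_n\neq X$; the same reasoning survives the pull-back to $\mathcal U$ since $X\notin\mathcal U$.

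One cosmetic point: you could bypass zero-dimensionality and the clopen refinement entirely. In a $P$-space every open set is a union of clopen sets anyway, but more to the point the argument with $[x]$ works verbatim with any countable $\omega$-cover $\{V_n\}$ by open sets: the intersection defining $[x]$ involves only countably many open sets and their complements, and in a $P$-space the complements $X\setminus V_n$ are closed $G_\delta$'s --- but you need them open. So your detour through clopen sets is not merely cosmetic; it is what makes $[x]$ open. You handled this correctly.
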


Similar to Theorem 3 in \cite{ger1}, we get the next result.

\begin{theorem} If $\mathbb{B}(X)$ is a $\omega$-$k$-space,
then $X_{\aleph_0}$ has the property $(\varphi)$.
\end{theorem}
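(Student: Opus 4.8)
The plan is to argue by contrapositive, or equivalently to start from a countable increasing $\omega$-cover of $X_{\aleph_0}$ and manufacture a non-closed subset of $\mathbb{B}(X)$ witnessing failure of the $\omega$-$k$-property; then the hypothesis that $\mathbb{B}(X)$ \emph{is} a $\omega$-$k$-space forces the conclusion of property $(\varphi)$. So suppose $\mathcal{U}=\bigcup\{\mathcal{U}_n:n\in\mathbb{N}\}$ is an open $\omega$-cover of $X_{\aleph_0}$ with $\mathcal{U}_n\subseteq\mathcal{U}_{n+1}$. Since the basic open sets of $X_{\aleph_0}$ may be taken to be zero-sets of $X$ (indeed $Z(X_{\aleph_0})$ and $CZ$ generate the topology, and $X_{\aleph_0}$ has $Ind=0$), I would first refine each $\mathcal{U}_n$ so that its members are zero-sets of $X$; this is the standard normalization that lets characteristic functions $f_Z\in L(X)\subseteq\mathbb{B}(X)$ come into play, exactly as in the proof of Theorem~\ref{lem1}.

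Next, for each finite subset $F$ of $X$ pick, using the $\omega$-cover property, some member of $\mathcal{U}$ containing $F$; because the $\mathcal{U}_n$ increase, $F$ is contained in a member of $\mathcal{U}_{n(F)}$ for all large $n$. Associate to each finite $F$ (or to each pair $(F,n)$ with $F$ $\omega$-covered by $\mathcal{U}_n$ but, say, chosen ``minimally'') a function $g_{F,n}\in\mathbb{B}(X)$ built as a finite integer combination of characteristic functions of the zero-sets from $\mathcal{U}_n$ that cover $F$ — the precise recipe being modeled on the function $f_z$ in the proof of Theorem~\ref{lem1}, arranged so that $g_{F,n}$ is $\geq 1$ on $F$ and the set $A=\{g_{F,n}\}$ has the constant function $\mathbf{1}$ in its closure (every basic neighborhood $\{f:f(x_i)>0,\,x_i\in\widetilde x\}$ of $\mathbf1$ meets $A$ because any finite $\widetilde x$ is $\omega$-covered). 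Thus $\mathbf1\in\overline{A}\setminus A$, so $A$ is not closed in $\mathbb{B}(X)$.

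Now invoke the $\omega$-$k$ hypothesis: there is an $\omega$-initially-compact subspace $C$ of $\mathbb{B}(X)$ with $C\cap A$ non-closed in $C$; pick $h\in\overline{C\cap A}\setminus(C\cap A)$. Using that points of $\mathbb{B}(X)$ are determined by countably many coordinates and that the topology of pointwise convergence restricted to $C$ together with $\omega$-initial compactness gives a weak-sequentiality-type conclusion, extract from $C\cap A$ a countable set, say $\{g_{F_k,n_k}:k\in\mathbb{N}\}$, still accumulating at $h$ (indeed one can assume $h=\mathbf 1$ by translating, or handle the general $h$ directly). Define $X_k:=\bigcup\{Z\in\mathcal{U}_{n_k}:Z$ used in forming $g_{F_k,n_k}\}$, a finite union from $\mathcal{U}_{n_k}$. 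The accumulation of the $g$'s at $\mathbf1$ in the pointwise topology translates, coordinate by coordinate, into: every $x\in X$ lies in $X_k$ for all but finitely many $k$, i.e. $\underline{\lim}X_k=X$; and by construction each finite $F\subseteq X_k$ (in particular $F_k$) is $\omega$-covered by $\mathcal{U}_{n_k}$, so by passing to a subsequence or re-indexing we may assume $n_k=k$ (since the $\mathcal{U}_n$ increase, replacing $\mathcal{U}_{n_k}$ by $\mathcal{U}_k$ only enlarges it), giving a sequence $X_k\subseteq X$ with $\underline{\lim}X_k=X$ and $X_k$ $\omega$-covered by $\mathcal{U}_k$ — which is precisely property $(\varphi)$ for $X_{\aleph_0}$.

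The main obstacle I anticipate is the middle step: extracting from the merely $\omega$-initially-compact witnessing subspace $C$ enough ``countable'' or ``sequential'' behaviour to pull out a genuine sequence (or at least a countable subfamily of $A$) accumulating at a point, and then reading off the two defining conditions of $(\varphi)$ simultaneously. The zero-set normalization and the construction of $A$ are routine variations on Theorem~\ref{lem1}; verifying $\underline{\lim}X_k=X$ from pointwise accumulation is a direct coordinatewise check; but coordinating the index $n_k$ with the place in the sequence, and making sure the non-closedness inside $C$ (rather than in all of $\mathbb{B}(X)$) still yields an accumulating countable subset of $A$ — that is where care is needed, and it is essentially the point where the hypothesis ``$\omega$-$k$-space'' (as opposed to ``$k$-space'') is used, exactly as in Gerlits' Theorem~3 of \cite{ger1}.
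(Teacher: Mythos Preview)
Your approach diverges from the paper's and contains a genuine gap at exactly the point you flag as ``where care is needed.''  You try to \emph{extract} from the countably compact set $C$ a countable sequence $\{g_{F_k,n_k}\}\subseteq C\cap A$ accumulating at a point, and then read off the $(\varphi)$-witnesses $X_k$ from these explicit functions.  But countable compactness ($=\omega$-initial compactness) gives no such extraction: $C$ need not be sequential, first countable, or even have countable tightness, and ``points of $\mathbb{B}(X)$ are determined by countably many coordinates'' is simply false when $X$ is uncountable.  So the sentence beginning ``extract from $C\cap A$ a countable set\ldots still accumulating at $h$'' has no justification.  Moreover, even granting the sequence, your $X_k$ is declared to be a \emph{finite union} of members of $\mathcal{U}_{n_k}$; such a set is covered by $\mathcal{U}_{n_k}$ but is not in general $\omega$-covered by it (you need every finite subset of $X_k$ inside a \emph{single} member), so the final verification of $(\varphi)$ does not go through as written.

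The paper's argument sidesteps both problems by working the other way round: argue by contradiction, let $\mathcal{U}=\bigcup\mathcal{U}_n$ witness the failure of $(\varphi)$, and set $A_n=\{f\in\mathbb{B}(X):f^{-1}(-\infty,n)\ \text{is}\ \omega\text{-covered by}\ \mathcal{U}_n\}$, $A=\bigcup A_n$.  Each $A_n$ is closed, $\mathbf{0}\in\overline A\setminus A$, so by $\omega$-$k$ there is countably compact $C$ with $C\cap A$ non-closed in $C$.  No sequence is extracted; instead one uses only that the \emph{projections} of $C$ onto each coordinate are bounded: for every $x$ there is $n(x)$ with $f(x)\le n(x)$ for all $f\in C$.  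Putting $X_n=\{x:n(x)\le n\}$ gives $\underline{\lim}X_n=X$, so by failure of $(\varphi)$ some $X_m$ is $\omega$-covered by no $\mathcal{U}_k$; a short computation then forces $C\cap A_k=\emptyset$ for $k>m$, whence $C\cap A=\bigcup_{k\le m}(C\cap A_k)$ is closed in $C$, a contradiction.  The two ideas you are missing are (i) defining $A_n$ abstractly so that each is closed and one can count how many meet $C$, and (ii) using boundedness of projections rather than any sequential behaviour of $C$.
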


\begin{proof} Otherwise $\mathbb{B}(X)$ is a
$\omega$-$k$-space, yet $X$ has not the property $(\varphi)$, and
let $\mathcal{U}=\bigcup\{\mathcal{U}_n: n\in \mathbb{N}\}$
witness this. Put for $n\in \mathbb{N}$, $n\geq 1$ and $A_n=\{f\in
\mathbb{B}(X): f^{-1}(-\infty, n)$ is $\omega$-covered by
$\mathcal{U}_n\}$, $A=\bigcup\{A_n : n\in \mathbb{N}\}$. Then
$A_n$ is closed in $\mathbb{B}(X)$ for any $n$. On the other hand,
$A$ is not closed in $\mathbb{B}(X)$, because ${\bf 0}\in
\overline{A}\setminus A$. As $\mathbb{B}(X)$ is a
$\omega$-$k$-space, there is a countably compact subset $C$ of
$\mathbb{B}(X)$ such that $C\cap A$ is non-closed in $C$. As $C$
is countably compact, so also is each of its projections on the
real line: for each $x\in X$ there is an $n(x)\in \mathbb{N}$ such
that for each $f\in C$, $f(x)\leq n(x)$. Put $X_n=\{x\in X:
n(x)\leq n \}$. As the sets $X_n$ monotonically increase and their
union is $X$, we have $\underline{\lim} X_n=X$. Using now that
$\{\mathcal{U}_n\}$ witnesses that $X$ has not $(\varphi)$, we get
an $m\in \omega$ such that no $\mathcal{U}_k$ $\omega$-covers
$X_m$.

Note that $C\cap A_k=\emptyset$ if $m<k<\omega$. Indeed, let $f\in
A_k$, $m<k<\omega$. $f^{-1}(-\infty, k)$ is $\omega$-covered by
$\mathcal{U}_k$, but $X_m$ is not, so $X_m\setminus
f^{-1}(-\infty, k)\neq \emptyset$, hence, there is a point $x\in
X_m$ such that $f(x)\geq k> m$ and $n(x)\leq m$. The definition of
$X_m$ implies now that $f\notin C$.

However, this is impossible because then $C\cap A=\bigcup \{C\cap
A_k : k\leq m\}$ would be closed in $C$, contrary to the choice of
$C$.

\end{proof}

 The following theorem is proved similarly to Theorem 4 in
 \cite{ger1}; therefore, we omit the proof of this theorem.

\begin{theorem}\label{th33} If $\mathbb{B}(X)$ is a $\omega_1$-$k$-space,
then $X_{\aleph_0}$ has the property $S_1(\Omega,\Gamma)$.
\end{theorem}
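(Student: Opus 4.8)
The plan is to mimic the proof of Theorem 4 in Gerlits \cite{ger1}, replacing $C_p(X)$ by $\mathbb{B}(X)$ and $X$ by $X_{\aleph_0}$, using the fact that $X_{\aleph_0}$ is a $P$-space so that zero-sets (the building blocks of functions in $\mathbb{B}(X)$) behave like clopen sets. Recall that by the previous theorem, if $\mathbb{B}(X)$ is a $\omega_1$-$k$-space (hence a $\omega$-$k$-space), then $X_{\aleph_0}$ has the property $(\varphi)$; by Gerlits' Theorem 1, it then suffices to show $X_{\aleph_0}$ also has the property $(\epsilon)$, i.e.\ every open $\omega$-cover contains a countable $\omega$-subcover --- equivalently, since $S_1(\Omega,\Gamma)$ is equivalent to $(\gamma)$ by Theorem \ref{th7}, and $(\gamma) \Leftrightarrow (\varphi)\wedge(\epsilon)$. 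So the target splits into: (a) recover $(\varphi)$ from the previous theorem, and (b) prove $(\epsilon)$ directly from the $\omega_1$-$k$-space hypothesis.

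First I would handle $(\epsilon)$. Let $\mathcal{U}$ be an open $\omega$-cover of $X_{\aleph_0}$; by refining, we may assume its members are cozero-sets (complements of zero-sets), and we want a countable $\omega$-subcover. Arguing by contradiction, suppose no countable subfamily is an $\omega$-cover. As in Gerlits, I would build, for each function $f$ lying in an appropriate subset $A$ of $\mathbb{B}(X)$, a characteristic-function-type witness: for a finite $F\subseteq X$ and $U\in\mathcal{U}$ with $F\subseteq U$, take the characteristic function of the zero-set $X\setminus U$ (this lies in $L(X)\subseteq\mathbb{B}(X)$), scaled suitably, and arrange that $\mathbf{0}$ (written $\mathbf 0$) is in the closure of the resulting set $A$ but $A$ is not closed. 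Then the $\omega_1$-$k$-space property yields an $\omega_1$-initially compact subspace $C$ of $\mathbb{B}(X)$ meeting $A$ in a non-closed subset; projecting $C$ to each coordinate axis $\mathbb{R}$ and using $\omega_1$-initial compactness gives, for each $x$, a bound on $\{f(x):f\in C\}$ — but here, since we only have $\omega_1$-initial compactness rather than countable compactness, the bounding argument must be done more carefully over the countably many "levels" simultaneously, exploiting that $X_{\aleph_0}$ is a $P$-space so that countable unions of zero-sets that we form stay manageable. The contradiction is then extracted exactly as in the $(\varphi)$ proof: the relevant intersection $C\cap A$ turns out to be closed in $C$, contradicting the choice of $C$.

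The main obstacle I anticipate is precisely the bookkeeping around $\omega_1$-initial compactness versus countable compactness: in the previous theorem's proof, countable compactness of $C$ gave, coordinate-wise, a genuine bound $n(x)$, and the sets $X_n=\{x:n(x)\le n\}$ increased to $X$. With only $\omega_1$-initial compactness one must instead work with covers of $C$ of size $\le\omega_1$ and show that the relevant functional constraints, which naturally come indexed by $\omega$ or $\omega_1$, still force the sets $X_n$ (or their analogues) to exhaust $X$ — this is where Gerlits' original Theorem 4 argument puts in real work, and I would follow it line by line, substituting zero-sets for clopen sets and $X_{\aleph_0}$ for $X$ throughout. Once $(\epsilon)$ is in hand, combining it with $(\varphi)$ from the previous theorem via Gerlits' Theorem 1 gives $(\gamma)$ for $X_{\aleph_0}$, and then Theorem \ref{th7} (items (3) and (4)) upgrades $(\gamma)$ to $S_1(\Omega,\Gamma)$, completing the proof. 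Since this is essentially a transcription of Gerlits' argument into the $P$-space/$\mathbb{B}$ setting, it is reasonable — as the authors do — to state that the proof follows Theorem 4 in \cite{ger1} and omit the routine details.
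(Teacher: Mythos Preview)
Your proposal is correct and matches the paper's approach exactly: the paper states that Theorem~\ref{th33} ``is proved similar to Theorem~4 in \cite{ger1}; therefore, we omit the proof of this theorem,'' and your sketch --- using the decomposition $(\gamma)\Leftrightarrow(\varphi)\wedge(\epsilon)$, invoking the previous theorem for $(\varphi)$, extracting $(\epsilon)$ from the $\omega_1$-$k$ hypothesis via characteristic functions of zero-sets in $\mathbb{B}(X)$, and finishing with Theorem~\ref{th7} --- is precisely the transcription of Gerlits' argument into the $\mathbb{B}(X)/X_{\aleph_0}$ setting that the paper has in mind.
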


Now we can consider a modification of Theorem P5.

\begin{theorem}\label{th4} Let $X$ be a Tychonoff space and $\mathbb{B}(X)\in \mathbb{B}$. Then the following
are equivalent.

\begin{enumerate}

\item $\mathbb{B}(X)$ is Fr$\acute{e}$chet-Urysohn;

\item $\mathbb{B}(X)$ is sequential;

\item $\mathbb{B}(X)$ is a $k$-space;

\item $\mathbb{B}(X)$ is a $\omega_1$-$k$-space;

\item $\mathbb{B}(X)$ has countable tightness;

\item $X_{\aleph_0}$ satisfies $S_1(\Omega,\Gamma)$;

\item $X_{\aleph_0}$ is Lindel$\ddot{o}$f.

\end{enumerate}

\end{theorem}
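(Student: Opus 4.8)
The plan is to establish the cycle of implications
$(1)\Rightarrow(2)\Rightarrow(3)\Rightarrow(4)\Rightarrow(5)\Rightarrow(7)\Rightarrow(6)\Rightarrow(1)$,
relying on the structural results already proved above and on the classical Gerlits--Nagy theory. The implications $(1)\Rightarrow(2)\Rightarrow(3)$ are immediate from the general topological fact that every Fr\'echet--Urysohn space is sequential and every sequential space is a $k$-space. For $(3)\Rightarrow(4)$ one observes that a $k$-space is a $\kappa$-$k$-space for every infinite cardinal $\kappa$ (compact sets are $\kappa$-initially compact), so in particular $\mathbb{B}(X)$ is a $\omega_1$-$k$-space.

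The heart of the argument is the passage $(4)\Rightarrow(5)\Rightarrow(7)$. For $(4)\Rightarrow(5)$ I would invoke Theorem~\ref{th33}: if $\mathbb{B}(X)$ is a $\omega_1$-$k$-space then $X_{\aleph_0}$ satisfies $S_1(\Omega,\Gamma)$, and in particular $X_{\aleph_0}$ satisfies the property $(\epsilon)$, so that every open $\omega$-cover of $X_{\aleph_0}$ has a countable $\omega$-subcover; by the Gerlits--Nagy characterization of $(\epsilon)$ via tightness of $C_p$ (item (c) in the property $(\epsilon)$), together with $\mathbb{B}(X)\subseteq C_p(X_{\aleph_0})$, this forces $t(\mathbb{B}(X))=\omega$. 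Alternatively and more directly, one uses Theorem~\ref{lem1}: $S_1(\Omega,\Gamma)$ for $X_{\aleph_0}$ implies $l(X_{\aleph_0})=\omega$, hence $l(X_{\aleph_0}^n)=\omega$ for all $n$ by the remark following the corollaries to Theorem~\ref{lem1}, so $t(\mathbb{B}(X))=\sup_n l(X_{\aleph_0}^n)=\omega$. For $(5)\Rightarrow(7)$: since $X_{\aleph_0}$ is a $P$-space, $X_{\aleph_0}^n$ is again a $P$-space, so $l(X_{\aleph_0}^n)=\omega$ for all $n$ is equivalent to $l(X_{\aleph_0})=\omega$ (in a $P$-space finite, in fact countable, products preserve the Lindel\"of number — this is the content of the cited remark from \cite{mir}), and Theorem~\ref{lem1} gives $\sup_n l(X_{\aleph_0}^n)=t(\mathbb{B}(X))=\omega$, whence $X_{\aleph_0}$ is Lindel\"of.

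The remaining implications $(7)\Rightarrow(6)\Rightarrow(1)$ close the circle. For $(7)\Rightarrow(6)$ one uses that a Lindel\"of $P$-space $X_{\aleph_0}$ has every open $\omega$-cover reducible to a countable $\gamma$-cover: in a $P$-space a countable $\omega$-cover $\{V_n\}$ yields the $\gamma$-cover $\{W_n\}$ with $W_n=\bigcap_{k\le n}V_k$ — wait, rather $\{\bigcup_{k\le n}V_k\}$ refined suitably — in any case the standard fact is that a Lindel\"of $P$-space satisfies $S_1(\Omega,\Gamma)$, equivalently the property $(\gamma)$; indeed Lindel\"ofness gives the property $(\epsilon)$ and being a $P$-space gives the property $(\varphi)$ trivially (any increasing open $\omega$-cover has its union stabilizing against each point), and by Gerlits' theorem $(\gamma)=(\varphi)\wedge(\epsilon)$, so $X_{\aleph_0}$ has $(\gamma)$, i.e. satisfies $S_1(\Omega,\Gamma)$. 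Finally $(6)\Rightarrow(1)$ is exactly the Gerlits--Nagy Theorem~\ref{th7}: $X_{\aleph_0}$ satisfies $S_1(\Omega,\Gamma)$ iff $C_p(X_{\aleph_0})$ is Fr\'echet--Urysohn, and since $\mathbb{B}(X)$ is a dense subspace of $C_p(X_{\aleph_0})$ containing $L(X)$, one checks that the Fr\'echet--Urysohn property at the constant functions transfers to $\mathbb{B}(X)$ — the relevant point is that the selection witnesses produced in $C_p(X_{\aleph_0})$ can be taken inside $\mathbb{B}(X)$, or more simply that $S_1(\Omega_{\mathbf 0},\Gamma_{\mathbf 0})$ holds in $\mathbb{B}(X)$ because the covers of $X_{\aleph_0}$ coming from a set $A\in\Omega_{\mathbf 0}$ in $\mathbb{B}(X)$ are genuine open $\omega$-covers of $X_{\aleph_0}$, and the $\gamma$-cover returned yields a sequence in $A$ converging to $\mathbf 0$.

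I expect the main obstacle to be the implication $(6)\Rightarrow(1)$, specifically making precise that the Fr\'echet--Urysohn property is inherited by the intermediate space $\mathbb{B}(X)$ rather than only holding in $C_p(X_{\aleph_0})$ itself: one must show that for any $A\subseteq\mathbb{B}(X)$ with $\mathbf 0\in\overline A$ there is a sequence in $A$ converging to $\mathbf 0$, and this requires translating such an $A$ into an open $\omega$-cover of $X_{\aleph_0}$ (via sets of the form $\{x : |f(x)|<1/k\}$ for $f\in A$), applying $S_1(\Omega,\Gamma)$ of $X_{\aleph_0}$, and pulling the resulting $\gamma$-cover back to a convergent sequence — the same bookkeeping that appears in Theorem~\ref{th7}, but now one must stay inside $\mathbb{B}(X)$, which is harmless since the functions are chosen from $A\subseteq\mathbb{B}(X)$ to begin with. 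A secondary technical point is the use in $(5)\Rightarrow(7)$ and $(7)\Rightarrow(6)$ of the $P$-space product fact $l(X_{\aleph_0})=\omega\Rightarrow l(X_{\aleph_0}^n)=\omega$ and the decomposition $(\gamma)=(\varphi)\wedge(\epsilon)$ of Gerlits; both are cited in the excerpt, so I would simply reference \cite{mir} and \cite{ger1} at those steps.
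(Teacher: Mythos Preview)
Your proof is correct and uses essentially the same ingredients as the paper: Theorem~\ref{lem1} for $(5)\Leftrightarrow(7)$, Theorem~\ref{th33} for $(4)\Rightarrow(6)$, and Gerlits--Nagy for closing the loop. The organization differs slightly (the paper does not run a single cycle but proves $(4)\Rightarrow(6)$, $(5)\Rightarrow(7)$, $(6)\Rightarrow(1)$, and $(7)\Rightarrow(1)$ separately), but the substance is the same.

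However, your ``main obstacle'' is not an obstacle at all: the Fr\'echet--Urysohn property is hereditary to \emph{arbitrary} subspaces, so once Gerlits--Nagy gives you that $C_p(X_{\aleph_0})$ is Fr\'echet--Urysohn, the inclusion $\mathbb{B}(X)\subseteq C_p(X_{\aleph_0})$ finishes $(6)\Rightarrow(1)$ immediately. No density, no bookkeeping with $\omega$-covers, no staying ``inside $\mathbb{B}(X)$'' is needed. The paper handles it in one line for exactly this reason. Similarly, for $(7)\Rightarrow(1)$ the paper simply invokes Arhangel'skii's Theorem~\ref{arh} (Lindel\"of $P$-space $\Rightarrow$ $C_p$ is Fr\'echet--Urysohn) and heredity, rather than detouring through $(7)\Rightarrow(6)$ via the decomposition $(\gamma)=(\varphi)\wedge(\epsilon)$; your route is valid but longer.
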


\begin{proof} $(1)\Rightarrow(2)\Rightarrow(3)\Rightarrow(4)$,
$(1)\Rightarrow(5)$ are immediate. Since $\mathbb{B}(X)\subseteq
C(X_{\aleph_0})$, then, by Theorem \ref{th7}, we have that
$(6)\Rightarrow(1)$ holds.

By Lemma \ref{lem1}, we have that $(5)\Rightarrow(7)$.

 By Theorem \ref{th33}, if $\mathbb{B}(X)$ is a
 $\omega_1$-$k$-space, then $X_{\aleph_0}$ satisfies
 $S_1(\Omega,\Gamma)$, i.e. $(4)\Rightarrow(6)$ holds.

$(7)\Rightarrow(1)$. Since $X_{\aleph_0}$ is a $P$-space, then, by
Theorem \ref{arh}, we have that $C_p(X_{\aleph_0})$ is
Fr$\acute{e}$chet-Urysohn. But $\mathbb{B}(X)\subseteq
C_p(X_{\aleph_0})$, hence $\mathbb{B}(X)$ is
Fr$\acute{e}$chet-Urysohn, too.

\end{proof}

\begin{corollary} Let $X$ be a Tychonoff space. Then the following
are equivalent.

\begin{enumerate}

\item $C_p(X_{\aleph_0})$ is Fr$\acute{e}$chet-Urysohn;

\item $C_p(X_{\aleph_0})$ is sequential;

\item $C_p(X_{\aleph_0})$ is a $k$-space;

 \item $C_p(X_{\aleph_0})$ is
a $\omega_1$-$k$-space;

\item $C_p(X_{\aleph_0})$ has countable tightness;

\item $X_{\aleph_0}$ satisfies $S_1(\Omega,\Gamma)$;

\item $X_{\aleph_0}$ is Lindel$\ddot{o}$f.

\end{enumerate}

\end{corollary}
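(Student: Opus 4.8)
The plan is to recognize the corollary as the single special case $\mathbb{B}(X) := C(X_{\aleph_0})$ of Theorem \ref{th4}. The only point requiring verification is that $C_p(X_{\aleph_0})$ genuinely belongs to the class $\mathbb{B}$, i.e. that $L(X) \subseteq C(X_{\aleph_0})$; this is already asserted in the paragraph introducing $\mathbb{B}$, but I would spell it out. For a zero-set $Z = Z(f)$ with $f \in C(X)$, the set $Z$ lies in the defining base of the topology $X_{\aleph_0}$, hence is open there; its complement $X \setminus Z$ is a cozero-set, hence open already in $X$ and a fortiori in the finer topology $X_{\aleph_0}$. Thus every zero-set is clopen in $X_{\aleph_0}$, its characteristic function is continuous on $X_{\aleph_0}$, and therefore $L(X) \subseteq C(X_{\aleph_0}) \subseteq C(X_{\aleph_0})$. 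Consequently $C(X_{\aleph_0}) \in \mathbb{B}$.

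With this identification in hand, I would simply invoke Theorem \ref{th4} with $\mathbb{B}(X) = C_p(X_{\aleph_0})$. Statements (1)--(5) of the corollary are word-for-word statements (1)--(5) of Theorem \ref{th4} for this choice of $\mathbb{B}(X)$, while conditions (6) and (7) refer only to $X_{\aleph_0}$ itself and are therefore unchanged. Hence all seven equivalences are inherited verbatim, and the corollary follows.

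There is essentially no obstacle: the entire content has been absorbed into Theorem \ref{th4}, whose proof already treats an arbitrary member of $\mathbb{B}$ (using $\mathbb{B}(X) \subseteq C(X_{\aleph_0})$ together with Lemma \ref{lem1} for the tightness direction, Theorem \ref{th7} and Arhangel'skii's Theorem \ref{arh} for the Fr\'echet--Urysohn conclusions, and Theorem \ref{th33} for the $\omega_1$-$k$-space implication). The only item specific to the corollary is the passage from the generic $\mathbb{B}(X)$ to the concrete largest member $C(X_{\aleph_0})$, which is exactly the membership check carried out above; I expect the write-up to occupy no more than two or three sentences.
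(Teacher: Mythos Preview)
Your proposal is correct and matches the paper's approach exactly: the corollary is stated in the paper without proof, as an immediate specialization of Theorem~\ref{th4} to the case $\mathbb{B}(X)=C_p(X_{\aleph_0})$, and the membership $C(X_{\aleph_0})\in\mathbb{B}$ is already recorded explicitly in the paragraph defining $\mathbb{B}$. Your verification that zero-sets are clopen in $X_{\aleph_0}$ is a helpful elaboration, but nothing further is required.
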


\begin{corollary}\label{cor2} Let $\mathbb{B}(X)$ be a $\omega_1$-$k$-space and
$B_1(X)\subseteq \mathbb{B}(X)$. Then
$B_1(X)=\mathbb{B}(X)=C(X_{\aleph_0})$.

\end{corollary}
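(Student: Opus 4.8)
The plan is to combine Theorem \ref{th4} with Theorem P5 (the original Pestriakov theorem for $B_\alpha(X)$, specialized here to $B_1(X)$), both of which identify the property of being a $\omega_1$-$k$-space with the Lindel\"of property of $X_{\aleph_0}$. First I would observe that by hypothesis $\mathbb{B}(X)$ is a $\omega_1$-$k$-space, so condition (4) of Theorem \ref{th4} holds; since all seven conditions there are equivalent, in particular condition (7) holds, i.e. $X_{\aleph_0}$ is Lindel\"of. Now the key point: once $X_{\aleph_0}$ is Lindel\"of, Theorem P5 (applied with $\alpha=1$) tells us that $B_1(X)$ has countable tightness, and moreover Theorem \ref{th4} applied to the space $C(X_{\aleph_0})\in \mathbb{B}$ tells us that $C_p(X_{\aleph_0})$ is Fr\'echet-Urysohn.

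The next step is to upgrade topological tameness to an actual equality of sets. The inclusions $B_1(X)\subseteq \mathbb{B}(X)\subseteq C(X_{\aleph_0})$ are given, so it suffices to show $C(X_{\aleph_0})\subseteq B_1(X)$ under the assumption that $X_{\aleph_0}$ is Lindel\"of. Here I would invoke the standard fact that for a Lindel\"of $P$-space $Y$ (and $X_{\aleph_0}$ is a $P$-space for any $X$), every continuous real-valued function on $Y$ is already a pointwise limit of a sequence of functions from $\bigcup_{\beta<1}B_\beta(X)=C(X)$; concretely, a real-valued function on $X$ is continuous with respect to the $G_\delta$-topology precisely when it is of Baire class $1$ on $X$, provided $X_{\aleph_0}$ is Lindel\"of. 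In other words, $C(X_{\aleph_0})=B_1(X)$ whenever $X_{\aleph_0}$ is Lindel\"of. Combined with the chain of inclusions above, this forces $B_1(X)=\mathbb{B}(X)=C(X_{\aleph_0})$.

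I expect the main obstacle to be the second step: justifying cleanly that Lindel\"ofness of $X_{\aleph_0}$ implies $C(X_{\aleph_0})=B_1(X)$. The inclusion $B_1(X)\subseteq C(X_{\aleph_0})$ is routine (functions of Baire class $1$ are continuous on the $G_\delta$-modification because the topology $X_{\aleph_0}$ is generated by $B_1(X)$, as noted in the introduction via \cite{jan1}), but the reverse inclusion uses that a continuous function on a Lindel\"of $P$-space can be approximated: given $f\in C(X_{\aleph_0})$, for each $n$ one covers $X$ by the $G_\delta$-sets $f^{-1}((q-\tfrac1n, q+\tfrac1n))$, extracts a countable subcover by Lindel\"ofness, shrinks each $G_\delta$-set to a zero-set, and builds a continuous step-like approximant in $C(X)$; letting $n\to\infty$ gives $f$ as a pointwise limit, so $f\in B_1(X)$. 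One must take care that the construction stays inside $C(X)=B_0(X)$ so that the limit genuinely lands in $B_1(X)$ rather than some higher class — this is where the $P$-space structure of $X_{\aleph_0}$ (zero-sets being clopen-like for the purposes of the $G_\delta$-topology) does the real work. With that lemma in hand the corollary follows immediately.
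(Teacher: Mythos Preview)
Your overall architecture is right: from the $\omega_1$-$k$ hypothesis, Theorem~\ref{th4} gives that $X_{\aleph_0}$ is Lindel\"of, and then the task reduces to showing $C(X_{\aleph_0})\subseteq B_1(X)$, since $B_1(X)\subseteq\mathbb{B}(X)\subseteq C(X_{\aleph_0})$ is already in hand. You also correctly record that $C_p(X_{\aleph_0})$ is Fr\'echet--Urysohn. The problem is that you then set this fact aside and attempt a bare-hands construction that does not work as written.

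The gap is in the phrase ``builds a continuous step-like approximant in $C(X)$.'' A step function assembled from characteristic functions of zero-sets of $X$ lies in $L(X)\subseteq B_1(X)$, not in $C(X)$: the characteristic function of a zero-set $Z$ is continuous on $X$ only when $Z$ is clopen in $X$. The $P$-space structure you invoke makes such sets clopen in $X_{\aleph_0}$, not in $X$, so it produces approximants in $C(X_{\aleph_0})$ --- which is exactly where you started. If you push your construction through honestly you land in $B_2(X)$, not $B_1(X)$.

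The paper's intended argument is the one you already have the ingredients for but do not use. Since $X$ is Tychonoff, $C(X)$ is dense in $\mathbb{R}^X$, hence dense in $C_p(X_{\aleph_0})$. You have established that $C_p(X_{\aleph_0})$ is Fr\'echet--Urysohn. Therefore every $f\in C(X_{\aleph_0})$ is the pointwise limit of a \emph{sequence} from $C(X)$, which is precisely the definition of $f\in B_1(X)$. That single sentence replaces your entire second paragraph and closes the argument.
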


\begin{corollary} Assume that $X_{\aleph_0}$ satisfies $S_1(\Omega,\Gamma)$. Then
$B_1(X)=C(X_{\aleph_0})$.

\end{corollary}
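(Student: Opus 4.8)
The plan is to deduce the statement directly from Corollary \ref{cor2}, taking the member of the class $\mathbb{B}$ appearing there to be $B_1(X)$ itself. The first step is to record that $B_1(X)\in\mathbb{B}$: by definition this means $L(X)\subseteq B_1(X)\subseteq C(X_{\aleph_0})$, which holds because characteristic functions of zero-sets — and hence their finite integer combinations — are of Baire class $1$, and because every Baire class $1$ function is continuous with respect to the $G_{\delta}$-topology; both facts are subsumed by the paper's observation that $B_{\alpha}(X)\in\mathbb{B}$ for every $\alpha>0$. Consequently Theorem \ref{th4} is applicable with $\mathbb{B}(X)=B_1(X)$.

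Next I would invoke the hypothesis, which is exactly condition (6) of Theorem \ref{th4} for $\mathbb{B}(X)=B_1(X)$, together with the implications $(6)\Rightarrow(1)\Rightarrow(2)\Rightarrow(3)\Rightarrow(4)$ of that theorem, to conclude that $B_1(X)$ is Fr$\acute{e}$chet-Urysohn and, in particular, a $\omega_1$-$k$-space. Finally, since $B_1(X)\subseteq B_1(X)$ trivially, Corollary \ref{cor2} applied to $\mathbb{B}(X)=B_1(X)$ gives $B_1(X)=B_1(X)=C(X_{\aleph_0})$, which is the desired equality.

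I do not anticipate any genuine obstacle: the substantive content is already packaged in Theorem \ref{th4} and Corollary \ref{cor2}, and the only point requiring (routine) verification is that $B_1(X)$ really lies in $\mathbb{B}$, so that Theorem \ref{th4} may be used for it. As an alternative that sidesteps even this remark, one may instead take $\mathbb{B}(X)=C(X_{\aleph_0})$: Theorem \ref{th4} then shows that $C(X_{\aleph_0})=C_p(X_{\aleph_0})$ is a $\omega_1$-$k$-space, and since $B_1(X)\subseteq C(X_{\aleph_0})$ (again because $B_1(X)\in\mathbb{B}$), Corollary \ref{cor2} yields $B_1(X)=C(X_{\aleph_0})$ once more.
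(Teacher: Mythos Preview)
Your proposal is correct and follows exactly the route the paper intends: the corollary is placed immediately after Corollary~\ref{cor2} and Theorem~\ref{th4} with no separate proof, so the implicit argument is precisely to use condition~(6) of Theorem~\ref{th4} (for some $\mathbb{B}(X)\in\mathbb{B}$ containing $B_1(X)$) to obtain the $\omega_1$-$k$-space property and then invoke Corollary~\ref{cor2}. Both of your variants --- taking $\mathbb{B}(X)=B_1(X)$ or $\mathbb{B}(X)=C(X_{\aleph_0})$ --- are acceptable instantiations of this.
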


\begin{corollary} Assume that $X$ is a perfectly normal space and $B_{\alpha}(X)$ is $k$-space for some $1\leq \alpha\leq \omega_1$. Then
$X$ is countable.

\end{corollary}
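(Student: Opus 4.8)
The plan is to reduce this to the equivalence ``(3) $\Leftrightarrow$ (7)'' of Theorem~\ref{th4}, which applies since $B_{\alpha}(X)\in\mathbb{B}$ for every $\alpha>0$. If $B_{\alpha}(X)$ is a $k$-space, then $X_{\aleph_0}$ is Lindel\"of. So the task becomes: a perfectly normal space $X$ whose $G_{\delta}$-modification $X_{\aleph_0}$ is Lindel\"of must be countable. First I would recall that in a perfectly normal space every closed set is a $G_{\delta}$; in particular every point is a $G_{\delta}$, so every singleton is open in $X_{\aleph_0}$. Hence $X_{\aleph_0}$ is a discrete space. A discrete Lindel\"of space is countable, so $|X|\le\omega_0$, as required.

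The only delicate point is justifying that singletons are $G_{\delta}$ in $X$. This is exactly where perfect normality is used: in a perfectly normal (hence $T_1$) space, each closed set --- in particular each point, since $X$ is Tychonoff, hence $T_1$ --- is a zero-set, or at least a countable intersection of open sets, which is all that is needed for it to be open in $X_{\aleph_0}$. I would phrase it via: $\{x\}$ closed $\Rightarrow$ $\{x\}=\bigcap_{n}U_n$ with $U_n$ open $\Rightarrow$ $\{x\}$ is open in the $G_{\delta}$-topology.

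I do not expect a real obstacle here; the content is entirely in the already-established chain of equivalences of Theorem~\ref{th4}, and the final step is the elementary observation ``Lindel\"of $+$ discrete $=$ countable.'' One should, however, double-check the convention in the paper that $X_{\aleph_0}$ is generated by countable intersections of open sets (equivalently zero-sets), which is stated in the Introduction; either version makes singletons clopen in $X_{\aleph_0}$ under perfect normality, so the argument is robust to that choice.
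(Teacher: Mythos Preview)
Your proposal is correct and is exactly the intended argument: the paper states this corollary without proof, as an immediate consequence of Theorem~\ref{th4}, and your derivation (perfect normality $\Rightarrow$ singletons are $G_\delta$ $\Rightarrow$ $X_{\aleph_0}$ discrete, while (3)$\Leftrightarrow$(7) of Theorem~\ref{th4} gives $X_{\aleph_0}$ Lindel\"of, hence $|X|\le\omega_0$) is precisely what the authors had in mind.
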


\begin{proposition} There exists a space $X$ such that $B_{\alpha}(X)$ is a
$\omega$-$k$-space, but not a $\omega_1$-$k$-space.
\end{proposition}

\begin{proof}
Let $X$ be the space $\omega_2\setminus L$, where $L$ denotes the
set of $\omega$-limits in $\omega_2$; then $X=X_{\aleph_0}$,
$C_p(X)=B_{\alpha}(X)$ ($0\leq \alpha \leq \omega_1$) and
$B_{\alpha}(X)$ is $\omega$-$k$ but not $\omega_1$-$k$ (see
Example in \cite{ger1}).
\end{proof}

\begin{proposition} There exists a space $X$ such that
$\omega_0=t(C_p(X))<t(B_{\alpha}(X))$ for $\alpha>0$.
\end{proposition}

\begin{proof}
The space $C_p([0,1])$ is not sequential
(Fr$\acute{e}$chet-Urysohn, $k$-space), but
$\omega_0=t(C_p([0,1]))<t(B_{\alpha}([0,1]))=\mathfrak{c}$ for any
$\alpha>0$.
\end{proof}

\begin{proposition}(MA+$\neg$ CH) There exists a set of reals $X$ such that $C_p(X)$
is sequential, but $t(\mathbb{B}(X))>\omega_0$ for any
$\mathbb{B}(X)\in \mathbb{B}$.
\end{proposition}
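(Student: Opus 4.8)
The plan is to invoke the classical consistency result that under MA + $\neg$CH there exists an uncountable set of reals $X$ which is a $\gamma$-set (equivalently, $X$ satisfies ${\Omega \choose \Gamma}$, i.e. $S_1(\Omega,\Gamma)$). The standard source here is the Galvin--Miller construction of $\gamma$-sets under MA; any such $X$ has the property that $C_p(X)$ is Fr\'echet--Urysohn by the Gerlits--Nagy Theorem \ref{th7}, hence in particular sequential. So the first half of the statement is handed to us by choosing $X$ to be an uncountable $\gamma$-set of reals.

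Next I would separate the behaviour of $X$ from that of $X_{\aleph_0}$. The point is that $X$ being a $\gamma$-set is a statement about the \emph{ordinary} topology of $X$, whereas $t(\mathbb{B}(X))$ is governed, via Theorem \ref{lem1}, by $\sup_{n} l(X^n_{\aleph_0})$. So it suffices to show that for an uncountable set of reals $X$ we have $l(X_{\aleph_0}) > \omega_0$; then Theorem \ref{lem1} immediately gives $t(\mathbb{B}(X)) > \omega_0$ for every $\mathbb{B}(X) \in \mathbb{B}$ (taking $n=1$ suffices). The reason $X_{\aleph_0}$ is not Lindel\"of is that a set of reals, with the $G_\delta$-topology, has all its countable subsets as open (and closed) sets, and more to the point every subset of $\mathbb{R}$ is a $G_\delta$-set in itself is false in general — but crucially, since $X$ is a separable metrizable space of uncountable cardinality, $X_{\aleph_0}$ contains a closed discrete subspace of size $\aleph_1$: indeed $X$ has no isolated points can be arranged (or pass to a crowded uncountable subset), and in $X_{\aleph_0}$ a set with no isolated points in the original topology becomes... — here I need to be more careful. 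The clean route is: $X_{\aleph_0}$ is a $P$-space, so by Theorem \ref{arh} $C_p(X_{\aleph_0})$ is Fr\'echet--Urysohn iff $X_{\aleph_0}$ is Lindel\"of; and $X_{\aleph_0}$ is Lindel\"of iff (for $X\subseteq\mathbb R$) $X$ is countable, because an uncountable separable metric space, refined to its $G_\delta$-topology, is a non-Lindel\"of $P$-space (the $G_\delta$-topology on an uncountable subset of $\mathbb R$ is never Lindel\"of — every point is a $G_\delta$ hence the whole space would be required to be, in effect, $\sigma$-closed-discrete-like, forcing countability). This is exactly the content already used implicitly in Corollary after Theorem \ref{th4} (''$X$ is countable'') for perfectly normal $X$.

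Concretely, the key steps in order are: (i) fix, under MA + $\neg$CH, an uncountable $\gamma$-set $X \subseteq \mathbb{R}$ (Galvin--Miller); (ii) by Theorem \ref{th7}, $C_p(X)$ is Fr\'echet--Urysohn, hence sequential; (iii) observe $X_{\aleph_0}$ is an uncountable $P$-space that is separable-metrizable-based, hence not Lindel\"of — equivalently $l(X_{\aleph_0}) \geq \omega_1$, using that an uncountable subset of $\mathbb R$ with the $G_\delta$ topology cannot be Lindel\"of since any Lindel\"of $P$-space in which points are $G_\delta$ and which is submetrizable must be countable; (iv) apply Theorem \ref{lem1}: $t(\mathbb{B}(X)) = \sup_n l(X^n_{\aleph_0}) \geq l(X_{\aleph_0}) \geq \omega_1 > \omega_0$, for every $\mathbb{B}(X) \in \mathbb{B}$.

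The main obstacle is step (iii): I must nail down the precise argument that an uncountable set of reals $X$ has $X_{\aleph_0}$ non-Lindel\"of. One safe way: suppose $X_{\aleph_0}$ were Lindel\"of; being a $P$-space it is then ''countably metacompact + Lindel\"of'', and in a Lindel\"of $P$-space every open cover has a countable subcover, so the cover of $X_{\aleph_0}$ by the countable sets $\{x : x \in X\}$ is not usable directly — instead cover $X$ by sets $X \setminus C$ for $C$ ranging over... — alternatively, and most cleanly, use that $X_{\aleph_0}$ Lindel\"of + $X$ submetrizable forces $nw(X_{\aleph_0}) = \omega$ via Theorem P8/the network-weight corollary, whence $|X| \leq \omega$, contradiction. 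So the honest obstacle is choosing the slickest of these equivalent routes and citing it correctly; I would lean on the already-cited fact (\cite{leri}) about $G_\delta$-modifications and the perfectly-normal corollary above, which together say precisely that for $X$ a set of reals, $X_{\aleph_0}$ Lindel\"of $\Leftrightarrow$ $X$ countable.
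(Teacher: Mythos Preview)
Your proposal is correct and follows essentially the paper's route: take a Galvin--Miller $\gamma$-set $X\subseteq\mathbb{R}$ of size $\mathfrak{c}$ (so $C_p(X)$ is Fr\'echet--Urysohn, hence sequential, by Gerlits--Nagy), observe that $X_{\aleph_0}$ is not Lindel\"of, and conclude $t(\mathbb{B}(X))>\omega_0$; the paper invokes the equivalence $(5)\Leftrightarrow(7)$ of the modified Theorem~P5, while you go through the tightness formula directly, which is the same implication. Your step~(iii) is over-thought: since $X\subseteq\mathbb{R}$ is first countable, every singleton $\{x\}$ is a $G_\delta$ in $X$, hence \emph{open} in $X_{\aleph_0}$; thus $X_{\aleph_0}$ is discrete, and an uncountable discrete space is never Lindel\"of.
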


\begin{proof} By Theorem 1 in \cite{gm}, assuming Martin's axiom,
there exists a set of reals $X$ of cardinality the continuum such
that $X$ has the property $S_1(\Omega,\Gamma)$. Then
$X_{\aleph_0}$ is not Lindel$\ddot{o}$f and, hence, by Theorem
\ref{th4},  $t(\mathbb{B}(X))>\omega_0$ for any $\mathbb{B}(X)\in
\mathbb{B}$.
\end{proof}

\begin{theorem} If $\mathbb{B}(X)$ is a $\omega$-$k$-space,
then $X$ satisfies $S_1(Z_{\Omega},Z_{\Gamma})$.

\end{theorem}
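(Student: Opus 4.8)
The plan is to argue by contradiction, running the same scheme as in the proof that an $\omega$-$k$-space $\mathbb{B}(X)$ forces $X_{\aleph_0}$ to have property $(\varphi)$ (which in turn follows Theorem~3 of \cite{ger1}). So suppose $\mathbb{B}(X)$ is an $\omega$-$k$-space while $S_1(Z_\Omega,Z_\Gamma)$ fails: fix a sequence $(\mathcal{U}_n:n\in\mathbb{N})$ of countable $\omega$-covers of $X$ by zero-sets, $\mathcal{U}_n=\{U^n_k:k\in\mathbb{N}\}$, such that no choice function $\sigma$ with $\sigma(n)\in\mathcal{U}_n$ produces a $\gamma$-cover; equivalently, for every such $\sigma$ some point of $X$ lies outside $\sigma(n)$ for infinitely many $n$.

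First I would manufacture an auxiliary subspace $A=\bigcup_{n}A_n$ of $\mathbb{B}(X)$. Since a finite intersection of zero-sets is a zero-set and $L(X)\subseteq\mathbb{B}(X)$, every characteristic function $\chi_{U^1_{k_1}\cap\cdots\cap U^n_{k_n}}$ (and finite sums of such) lies in $\mathbb{B}(X)$; it is essential to build the gadget only from characteristic functions of zero-sets, not of cozero sets, because $\mathbb{B}(X)$ may be as small as $L(X)$. Combining these functions with the threshold device from the $(\varphi)$-proof, I would let $f\in A_n$ record that the sublevel set $\{x:f(x)<n\}$ is $\omega$-covered by a suitable \emph{finite} subfamily of $\{U^1_{k_1}\cap\cdots\cap U^n_{k_n}:\bar k\in\mathbb{N}^n\}$; the finiteness forces (by the usual remark that a finite $\omega$-cover of a set has a single member containing that set) the sublevel set into one such intersection, and makes each $A_n$ closed in $\mathbb{B}(X)$ by the same open-complement argument as before. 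I would then verify that $A$ is not closed, with a constant function $\mathbf{0}$ in $\overline{A}\setminus A$: that it is in $\overline{A}$ uses that every $\mathcal{U}_n$ is an $\omega$-cover, and that it is not in $A$ uses the assumed failure of the selection principle.

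Next, since $A$ is not closed and $\mathbb{B}(X)$ is an $\omega$-$k$-space, there is a countably compact subspace $C\subseteq\mathbb{B}(X)$ with $C\cap A$ not closed in $C$. As in the $(\varphi)$-proof, each coordinate projection of $C$ is a countably compact, hence bounded, subset of $\mathbb{R}$, so for every $x\in X$ there is $n(x)\in\mathbb{N}$ with $f(x)\le n(x)$ for all $f\in C$; the sets $X_m=\{x:n(x)\le m\}$ satisfy $\underline{\lim}X_m=X$. The decisive step — and the one I expect to be the main obstacle — is to deduce from the failure of $S_1(Z_\Omega,Z_\Gamma)$ that there is an $m$ with $C\cap A_k=\emptyset$ for all $k>m$: a function $f\in A_k$ pushes $\{x:f(x)<k\}$ into a single intersection $U^1_{k_1}\cap\cdots\cap U^k_{k_k}$, and combining the failure of the selection principle with $\underline{\lim}X_m=X$ one should be able to exhibit, at level $k>m$, a point $x\in X_m$ forced to satisfy $f(x)\ge k>m\ge n(x)$, whence $f\notin C$. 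Granting this, $C\cap A=\bigcup_{k\le m}(C\cap A_k)$ is a finite union of closed sets, hence closed in $C$, contradicting the choice of $C$.

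Two further points require attention. First, as set up the argument delivers a $\gamma$-cover of $X$ consisting of finite intersections of members of the $\mathcal{U}_n$, which yields at once the formally weaker property ${Z_\Omega \choose Z_\Gamma}$ (apply the construction to a single $\omega$-cover closed under finite intersections); to pass from ${Z_\Omega \choose Z_\Gamma}$ to the stated $S_1(Z_\Omega,Z_\Gamma)$ I would invoke the zero-set counterpart of the Gerlits-Nagy equivalence recorded in Theorem~\ref{th7}, namely that ${Z_\Omega \choose Z_\Gamma}$ and $S_1(Z_\Omega,Z_\Gamma)$ coincide. Second, in defining $A_n$ the finite subfamilies of intersections must be chosen along a fixed diagonal enumeration of the finite index-tuples (exactly as in the earlier tightness and density computations), so that truncating the genuine $\omega$-covers $\{U^1_{k_1}\cap\cdots\cap U^n_{k_n}:\bar k\in\mathbb{N}^n\}$ to finite families — necessary for the closedness of $A_n$ to carry any content — still leaves $\mathbf{0}$ in the closure of $\bigcup_n A_n$.
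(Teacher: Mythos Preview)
Your scheme is in the right spirit but is considerably more elaborate than what the paper actually does. The paper does not start from a \emph{sequence} $(\mathcal{U}_n)$ of $\omega$-covers at all: it fixes a single countable zero-set $\omega$-cover $\alpha=\{F_i:i\in\mathbb{N}\}$ and forms the explicit countable set
\[
A=\{\,h_n:=n\cdot \chi_{X\setminus F_n}\;:\;n\in\mathbb{N}\,\}\subset \mathbb{B}(X).
\]
One checks $\mathbf{0}\in\overline{A}\setminus A$, picks a countably compact $C$ with $C\cap A$ non-closed in $C$, and builds the sets $X_m=\{x:f(x)<m$ for all $f\in C\}$ exactly as you describe. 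The dichotomy is then immediate: either every $X_m$ lies in some $F_{i(m)}$, and $\{F_{i(m)}:m\in\mathbb{N}\}$ is the desired $\gamma$-subcover of $\alpha$; or some $X_{m'}$ meets the complement of \emph{every} $F_i$, and then for each $n>m'$ there is $x\in X_{m'}\setminus F_n$ with $h_n(x)=n>m'$, so $h_n\notin C$, whence $C\cap A\subseteq\{h_i:i\le m'\}$ is finite and hence closed --- a contradiction. There is no need for finite intersections across several covers, no diagonal enumeration, and no verification that any $A_n$ is closed (each ``$A_n$'' is a singleton).

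So the paper, like your own fallback, really proves ${Z_\Omega\choose Z_\Gamma}$; the passage to $S_1(Z_\Omega,Z_\Gamma)$ does require the zero-set analogue of the Gerlits--Nagy equivalence, a point you correctly flag and the paper leaves implicit. Your caution about building only from characteristic functions of zero-sets is also well placed: the paper's $\chi_{X\setminus F_n}=\chi_X-\chi_{F_n}$ strictly speaking needs the coefficients in $L(X)$ to range over $\mathbb{Z}$ rather than $\omega$, a cosmetic issue. What your route would buy, if the ``decisive step'' you worry about could be made precise, is a direct attack on $S_1$ --- but as you yourself note, the argument as written collapses back to ${Z_\Omega\choose Z_\Gamma}$, so the extra machinery gains nothing over the paper's one-line construction of $A$.
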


\begin{proof} Let $\alpha=\{F_i : i\in \mathbb{N}\}$ be a $\omega$-cover
of $X$ by zero-sets of $X$. Consider $A=\{h_n: h_n=n\cdot f_n$,
$f_n$ is the characteristic function of $X\setminus F_n$, $F_n\in
\alpha$, $n\in \mathbb{N}\}$. Note that ${\bf 0}\in
\overline{A}\setminus A$. Hence, there exists a countably compact
set $C$ such that $A\bigcap C$ is not a closed subset of $C$.
Since $C$ is a countably compact set, whenever $x\in X$ there is
$n(x)\in \mathbb{N}$ such that $f(x)< n(x)$ for each $f\in C$. Let
$X_n=\{x\in X : f(x)<n$ for each $f\in C \}$. Then
$X_{n+1}\supseteq X_n$ and $X=\bigcup\limits_n X_n$.

If for every $n$ there exists $i(n)$ such that $X_n\subseteq
F_{i(n)}$, then $\{F_{i(n)} : n\in \mathbb{N}\}$ is a
$\gamma$-cover of $X$. Otherwise, there is  an $n'$ such that
$X_{n'}\setminus F_i\neq \emptyset$ for each $i\in \mathbb{N}$.
Fix an $n\in \mathbb{N}$ such that $n>n'$. There is an $x\in
X_{n'}\setminus F_n$ such that $h_n(x)=n>n'$. It follows that
$h_n\notin C$. Thus, we have that $A\bigcap C=\{h_i:
i<n'+1\}\bigcap C$ is not a closed subset of $C$, a contradiction.

\end{proof}

Recall that a space $X$ is called {\it proper analytic} if it
admits a perfect map onto an analytic subset of a complete
separable metric space. A space $X$ is { \it disjoint analytic} if
and only if it is a one-to-one continuous image of a proper
analytic space \cite{jan1}. Note that any $K$-Lusin space is a
disjoint analytic space.

\begin{theorem} Let $X$ be a disjoint analytic space and $B_1(X)\subseteq \mathbb{B}(X)$. Then the
following are equivalent:

\begin{enumerate}

\item $X$ is scattered;

\item $\mathbb{B}(X)$ is Fr$\acute{e}$chet-Urysohn.

\end{enumerate}

\end{theorem}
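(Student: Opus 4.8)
The plan is to prove the two implications separately, exploiting Theorem~\ref{th4} to reduce the Fr\'echet--Urysohn property of $\mathbb{B}(X)$ to the statement that $X_{\aleph_0}$ is Lindel\"of, and then exploiting the structure of disjoint analytic spaces.

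First I would handle $(2)\Rightarrow(1)$. Suppose $\mathbb{B}(X)$ is Fr\'echet--Urysohn. By Theorem~\ref{th4} (which applies since $\mathbb{B}(X)\in\mathbb{B}$), this is equivalent to $X_{\aleph_0}$ being Lindel\"of. Now I would invoke the structure of $X$ as a disjoint analytic space: $X$ is the one-to-one continuous image of a proper analytic space $P$, and $P$ maps perfectly onto an analytic subset $M$ of a Polish space. The key point is that if a disjoint analytic space is \emph{not} scattered, it contains (a copy of, or maps onto) a perfect set, and hence a copy of the Cantor set or of the rationals; in either case one gets a subspace $Y\subseteq X$ whose $G_\delta$-modification $Y_{\aleph_0}$ is not Lindel\"of --- e.g. the $G_\delta$-modification of the Cantor set is a non-Lindel\"of $P$-space of weight $\mathfrak{c}$ (it is homeomorphic to the space of countable subsets arrangement giving a non-Lindel\"of $P$-space). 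Since Lindel\"ofness of $X_{\aleph_0}$ passes to closed subspaces, and since the relevant subspace can be taken closed (using the perfect map), this contradicts $X_{\aleph_0}$ Lindel\"of. Hence $X$ must be scattered.

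For $(1)\Rightarrow(2)$, suppose $X$ is scattered. I would use that a scattered disjoint analytic space is countable: a disjoint analytic space is the one-to-one continuous image of a proper analytic space $P$, and a proper analytic space that maps one-to-one-continuously onto a scattered space must itself be (essentially) scattered and analytic, hence countable (an uncountable analytic space contains a perfect set, which has no isolated points, incompatible with scatteredness being preserved under the relevant maps --- here one uses that the perfect preimage of a point is compact, and a perfect analytic space cannot be carved into such pieces over a scattered base without producing a perfect subset). Once $X$ is countable, $X_{\aleph_0}$ is a countable space, hence Lindel\"of, and by Theorem~\ref{th4} again $\mathbb{B}(X)$ is Fr\'echet--Urysohn.

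The main obstacle I anticipate is making precise the dichotomy for disjoint analytic spaces in the $(2)\Rightarrow(1)$ direction: one must show that a non-scattered disjoint analytic space $X$ has a subspace whose $G_\delta$-modification fails to be Lindel\"of, and for this the cleanest route is to produce inside $X$ either a topological copy of $2^\omega$ or of $\mathbb{Q}$ as a (relatively) closed subset, using the perfect map onto the analytic subset $M$ together with the classical perfect-set property for analytic sets; then cite (as in Theorem~P5 / Theorem~\ref{th4}) that neither $(2^\omega)_{\aleph_0}$ nor $\mathbb{Q}_{\aleph_0}$ is Lindel\"of. The bookkeeping with the perfect map (to keep the witnessing subspace closed, so that non-Lindel\"ofness is inherited upward) is the delicate part; everything else is a direct application of Theorem~\ref{th4}.
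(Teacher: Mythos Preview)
Your plan diverges from the paper's proof in both directions, and in each direction you replace a one-line citation by an argument that is neither complete nor clearly correct.

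For $(1)\Rightarrow(2)$ the paper does \emph{not} argue that $X$ is countable. It simply quotes the Levy--Rice fact (already used after Corollary~2.3) that $l(X)=l(X_{\aleph_0})$ for any scattered $X$; since a disjoint analytic space is a continuous image of a Lindel\"of space and hence Lindel\"of, $X_{\aleph_0}$ is Lindel\"of and Theorem~\ref{th4} finishes. Your detour through ``scattered disjoint analytic $\Rightarrow$ countable'' is unjustified as written (the sentence about ``a perfect analytic space cannot be carved into such pieces over a scattered base'' is not an argument), and it is unnecessary: Levy--Rice gives the conclusion without any cardinality bound on $X$.

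For $(2)\Rightarrow(1)$ you never use the hypothesis $B_1(X)\subseteq\mathbb{B}(X)$, which is exactly the lever the paper pulls. The paper combines Theorem~\ref{th4} with Corollary~\ref{cor2} to obtain $B_1(X)=\mathbb{B}(X)=C_p(X_{\aleph_0})$, and then invokes Jayne's Theorem~6 in \cite{jan1}, which says precisely that for a disjoint analytic space this equality forces $X$ to be scattered. Your alternative --- produce a closed copy of $2^\omega$ or $\mathbb{Q}$ inside a non-scattered disjoint analytic $X$ and show its $G_\delta$-modification is non-Lindel\"of --- amounts to reproving the relevant part of Jayne's theorem from scratch. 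You yourself flag the ``bookkeeping with the perfect map'' as the delicate part, and indeed the existence of such a closed copy is not obvious from the definition of disjoint analytic; this is the substance that \cite{jan1} supplies. There is also a technical hazard you do not address: for a subspace $Y\subseteq X$ the trace of $X_{\aleph_0}$ on $Y$ is in general only coarser than $Y_{\aleph_0}$, so non-Lindel\"ofness of $Y_{\aleph_0}$ does not automatically transfer (this is harmless when $Y$ is compact, e.g.\ $Y\cong 2^\omega$, since then $Y$ is $C^*$-embedded; but it does matter for the $\mathbb{Q}$ branch you mention).

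In short: the paper's proof is two citations (Levy--Rice for one implication, Corollary~\ref{cor2} plus Jayne for the other); your plan replaces each citation by an ad hoc construction whose key step is left open.
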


\begin{proof} If $X$ is scattered, then $l(X)=l(X_{\aleph_0})$
\cite{leri}. By Theorem \ref{th4}, $\mathbb{B}(X)$ is
Fr$\acute{e}$chet-Urysohn.

If $\mathbb{B}(X)$ is Fr$\acute{e}$chet-Urysohn, then, by Theorem
\ref{th4} and Corollary \ref{cor2},
$B_1(X)=\mathbb{B}(X)=C_p(X_{\aleph_0})$. Then, by Theorem 6 in
\cite{jan1}, $X$ is scattered.
\end{proof}

It is well-known that for a compact space $X$, $C_p(X)$ is
Fr$\acute{e}$chet-Urysohn if and only if $C_p(X)$ is a $k$-space
if and only if $X$ is scattered \cite{ger1,py}.

\begin{corollary} For a compact space $X$ and $\alpha>0$,
$B_{\alpha}(X)$ is Fr$\acute{e}$chet-Urysohn if and only if
$B_{\alpha}(X)$ is a $k$-space if and only if $X$ is scattered.
\end{corollary}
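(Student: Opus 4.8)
The plan is to obtain the corollary directly from Theorem \ref{th4} together with the classical characterisation of scatteredness of a compactum recalled just above the statement. A compact space is Tychonoff, and $B_{\alpha}(X)\in\mathbb{B}$ for every $\alpha>0$, so Theorem \ref{th4} applies with $\mathbb{B}(X)=B_{\alpha}(X)$; in particular the properties ``$B_{\alpha}(X)$ is Fr$\acute{e}$chet--Urysohn'' and ``$B_{\alpha}(X)$ is a $k$-space'' are equivalent to each other and to ``$X_{\aleph_0}$ is Lindel$\ddot{o}$f''. Thus everything reduces to showing, for compact $X$, that $X_{\aleph_0}$ is Lindel$\ddot{o}$f if and only if $X$ is scattered.

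For ``$X$ scattered $\Rightarrow$ $X_{\aleph_0}$ Lindel$\ddot{o}$f'' I would argue exactly as in the earlier corollaries on scattered spaces: if $X$ is scattered then $l(X_{\aleph_0})=l(X)$ by \cite{leri}, and $l(X)=\omega_{0}$ because $X$ is compact, so $X_{\aleph_0}$ is Lindel$\ddot{o}$f and Theorem \ref{th4} gives that $B_{\alpha}(X)$ is Fr$\acute{e}$chet--Urysohn (hence a $k$-space). For the converse I would note that $C(X)$, with the topology of pointwise convergence inherited from $B_{\alpha}(X)$, is precisely $C_{p}(X)$ --- both being the pointwise topology from $\mathbb{R}^{X}$ --- and that the Fr$\acute{e}$chet--Urysohn property is hereditary: if $B_{\alpha}(X)$ is Fr$\acute{e}$chet--Urysohn (equivalently, by Theorem \ref{th4}, a $k$-space), then so is its subspace $C_{p}(X)$, and hence $X$ is scattered by the classical theorem for compact spaces recalled before the statement. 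Assembling the two implications with the equivalence ``Fr$\acute{e}$chet--Urysohn $\Leftrightarrow$ $k$-space'' from Theorem \ref{th4} yields the triple equivalence.

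I do not expect a genuine obstacle: once Theorem \ref{th4} is in hand the argument is largely bookkeeping, the only points worth a line being the identification of the subspace topology on $C(X)\subseteq B_{\alpha}(X)$ with that of $C_{p}(X)$, and the precise invocation of the classical fact ``$C_{p}(X)$ Fr$\acute{e}$chet--Urysohn $\Leftrightarrow$ $X$ scattered'' for compact $X$. If one prefers to avoid that classical fact, the converse can be made self-contained by contraposition: a compact non-scattered $X$ contains a nonempty closed crowded subspace $K=\overline{S}$, where $S\subseteq X$ has no isolated point, and $K$ admits a continuous surjection onto $[0,1]$ by a standard Cantor-scheme argument; the same map is continuous and onto from $K_{\aleph_0}$ to $[0,1]_{\aleph_0}$, but every singleton of $[0,1]$ is a $G_{\delta}$, so $[0,1]_{\aleph_0}$ is discrete of cardinality $\mathfrak{c}$, hence not Lindel$\ddot{o}$f; therefore $K_{\aleph_0}$ is not Lindel$\ddot{o}$f, and since $K_{\aleph_0}$ is a closed subspace of $X_{\aleph_0}$, neither is $X_{\aleph_0}$, so by Theorem \ref{th4} $B_{\alpha}(X)$ is neither Fr$\acute{e}$chet--Urysohn nor a $k$-space.
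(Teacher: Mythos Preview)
Your argument is correct. The forward implication (scattered $\Rightarrow$ Fr\'echet--Urysohn) and the equivalence Fr\'echet--Urysohn $\Leftrightarrow$ $k$-space via Theorem~\ref{th4} coincide with the paper's reasoning. For the converse, however, the paper takes a different route: it deduces the corollary from the preceding theorem on disjoint analytic spaces (every compact space is proper analytic, hence disjoint analytic, and $B_1(X)\subseteq B_\alpha(X)$), whose backward direction invokes Corollary~\ref{cor2} to get $B_1(X)=C_p(X_{\aleph_0})$ and then applies Jayne's Theorem~6 in~\cite{jan1} to conclude that $X$ is scattered.

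Your Option~A is more elementary and stays entirely within material explicitly recalled in the paper: Fr\'echet--Urysohn is hereditary, $C_p(X)$ sits inside $B_\alpha(X)$ with the same pointwise topology, and the classical Gerlits--Pytkeev equivalence for compacta (quoted just above the corollary) finishes the job. This avoids both the disjoint-analytic framework and the external citation to Jayne. Your Option~B is also sound---the key observations that a continuous map $f:K\to[0,1]$ remains continuous as a map $K_{\aleph_0}\to[0,1]_{\aleph_0}$ (preimages of $G_\delta$'s are $G_\delta$'s) and that $(K)_{\aleph_0}$ is exactly the subspace $K$ of $X_{\aleph_0}$ are both correct---though it reproves a special case of what \cite{leri} already gives. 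What the paper's route buys is uniformity: the disjoint-analytic theorem covers a broader class than compacta in one stroke; what your route buys is a self-contained derivation that does not pass through~\cite{jan1}.
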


Thus we have that if a compact space $X$ is not scattered, then
$t(B_{\alpha}(X))\geq l(X_{\aleph_0})\geq \mathfrak{c}$.

Note that there exists a scattered space $Z$ such that
$t(B_1(Z))>\omega_0$.

\begin{example} Let $Z$ be the set of all countable ordinals endowed
with the interval topology. Then $Z$ is scattered pseudocompact
and $t(B_1(Z))>\omega_0$.
\end{example}

A.V. Arhangel'skii \cite{arh3} (see also \cite{wf}) asked the
question: For what compact spaces $X$ does the inequality
$l(X_{\aleph_0})\leq \mathfrak{c}$ hold ?

It is  well-known that the answer is positive in the following
cases:

1. $X$ is a finite product of ordered compact spaces \cite{wf}.

2. $X$ is a compact space of countable tightness \cite{py1}.

3. $X$ is a weakly Corson compact space \cite{py3}.

This implies, in particular, $t(B_{\alpha}(X))\leq \mathfrak{c}$
for any space $X$ in these classes of spaces.

In \cite{arh3,wf}, it was shown that the Lindel$\ddot{o}$f number
of $X_{\aleph_0}$ for a compact space $X$ can be arbitrary large
(for example, the Stone-$\check{C}$ech compactification $\beta(D)$
of a discrete space $D$). Therefore, the tightness of
$B_{\alpha}(X)$ for compact spaces $X$ is not bounded. E.G.
Pytkeev proved the following remarkable result (Theorem 1.1. in
\cite{py3}).

\begin{theorem}(Pytkeev) Let $X$ be a Tychonoff space. Then

$t(C_p(X))\leq t(B_{\alpha}(X))\leq exp(t(C_p(X))\cdot t(X))$.

\end{theorem}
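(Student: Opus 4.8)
The plan is to prove the chain $t(C_p(X))\le t(B_{\alpha}(X))$ directly from the definitions, and then to establish the upper bound $t(B_{\alpha}(X))\le \exp(t(C_p(X))\cdot t(X))$ by combining two facts already available in the excerpt: the identity $t(B_{\alpha}(X))=\sup_{n}l(X^n_{\aleph_0})$ (Theorem \ref{lem1}, equivalently the corollary after it, valid since $B_{\alpha}(X)\in\mathbb{B}$) and the Arhangel'skii--Pytkeev description $t(C_p(X))=\sup_n l(X^n)$. Thus the whole theorem reduces to comparing $l(X^n_{\aleph_0})$ with $l(X^n)$ and $l(X)$. First, for the lower bound: $C_p(X)$ embeds into $B_{\alpha}(X)$ as a subspace (indeed $C(X)=B_0(X)\subseteq B_{\alpha}(X)$), and tightness is monotone under passing to subspaces (for any $x$ in the subspace, a witnessing small set for the larger space can be taken inside the subspace since the relative closure is the trace of the closure), so $t(C_p(X))\le t(B_{\alpha}(X))$ is immediate. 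One should also remark that $t(C_p(X))\le t(B_\alpha(X))$ follows from $\sup_n l(X^n)\le \sup_n l(X^n_{\aleph_0})$ together with the two identities, since the $G_\delta$-modification only refines the topology and hence cannot decrease the Lindelöf number — this gives a second, self-contained route to the left inequality.

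For the right inequality, set $\lambda=t(C_p(X))=\sup_n l(X^n)$ and $\tau=t(X)$; we must show $l(X^n_{\aleph_0})\le \exp(\lambda\cdot\tau)$ for every $n$. Fix $n$ and let $\mathcal{W}$ be an open cover of $X^n_{\aleph_0}$; we may assume each member of $\mathcal{W}$ is a basic box $\prod_{i=1}^n Z_i$ with $Z_i$ a zero-set of $X$ (a canonical basic open set of $X_{\aleph_0}$). For each point $x\in X^n$ pick such a box $W_x\ni x$. The key point is to bound how many distinct boxes are needed: because each coordinate zero-set $Z_i$ contains $x_i$ and, being a zero-set, is a $G_\delta$ that "looks closed" — but more to the point, in $X^n$ (with the original topology) the cover $\{W_x\}$ is an open cover of the Lindelöf-degree-$\le\lambda$ space $X^n$, so it has a subcover of size $\le\lambda$. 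The difficulty is that this subcover need not cover $X^n_{\aleph_0}$, because in the $G_\delta$-topology points that were "infinitely close" are now separated. The standard device (this is exactly the mechanism behind the bound $l(X_{\aleph_0})\le\exp(t(X))$ of Juhász–van Mill type, and behind Pytkeev's theorem) is to iterate: one builds a tree of height $\le\tau$ of partial subcovers, using at each node the fact that the uncovered remainder, being a subspace of $X^n$, is again Lindelöf-of-degree-$\le\lambda$, and that a point of $X^n_{\aleph_0}$ not yet covered witnesses — via countable intersections of zero-sets — a point in the original space whose neighborhoods must eventually be caught; the branching is controlled by $\lambda$ and the height by $t(X)$ (since closures in $X$ are determined by subsets of size $\le\tau$). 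Carefully bookkeeping the branching $\lambda$ over height $\le\tau$ yields at most $\lambda^{\tau}\le\exp(\lambda\cdot\tau)$ many boxes, and one checks these cover all of $X^n_{\aleph_0}$.

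The main obstacle is precisely this iterated-covering argument in the middle paragraph: making rigorous that the "leftover" after a $\lambda$-sized partial subcover can be analyzed in the original topology of $X^n$, and that the process of refining along countable intersections of zero-sets must terminate (or stabilize) after $\le\tau$ steps because a point of $X_{\aleph_0}^n$ failing to be covered would otherwise produce, via a $\tau$-indexed decreasing transfinite sequence of closed-in-$X^n$ sets with empty intersection in $X_{\aleph_0}^n$, a contradiction with $t(X^n)\le t(X)\cdot\omega=\tau$ — here one uses that tightness controls the cofinality of such intersections. I would organize this as a lemma: \emph{if $l(Y)\le\lambda$ and $t(Y)\le\tau$ then $l(Y_{\aleph_0})\le\lambda^{\tau}$}, prove the lemma by the transfinite-tree construction, and then apply it to $Y=X^n$ noting $l(X^n)\le\lambda$ by Arhangel'skii--Pytkeev and $t(X^n)\le\tau$ by the product theorem for tightness of finitely many factors (or simply $t(X^n)\le \tau\cdot\omega$); finally take the supremum over $n$ and invoke Theorem \ref{lem1}. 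The routine parts — monotonicity of tightness and Lindelöf number, reducing to basic boxes, the arithmetic $\lambda^{\tau}\le\exp(\lambda\tau)$ — I would state without detailed computation.
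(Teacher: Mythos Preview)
The paper does not prove this theorem; it is quoted as ``Theorem 1.1 in \cite{py3}'' and attributed to Pytkeev without argument. So there is no proof in the paper to compare against, and your proposal has to be assessed on its own.

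Your reduction is the natural one and presumably matches Pytkeev's route: by Theorem~\ref{lem1} (equivalently P1) one has $t(B_\alpha(X))=\sup_n l(X^n_{\aleph_0})$, and by Arhangel'skii--Pytkeev $t(C_p(X))=\sup_n l(X^n)$, so the left inequality is immediate (refining the topology cannot lower the Lindel\"of number; your alternative subspace argument is also fine, since tightness is hereditary). The entire content lies in the right inequality, and here your sketch has real gaps.

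First, even granting your lemma ``$l(Y)\le\lambda$ and $t(Y)\le\tau$ imply $l(Y_{\aleph_0})\le\lambda^{\tau}$'', you cannot apply it to $Y=X^n$ as written, because tightness is \emph{not} finitely productive: there is no ``product theorem for tightness of finitely many factors'', and there are (consistently, and in some forms in ZFC) spaces with $t(X)=\omega$ but $t(X^2)>\omega$. So the step ``$t(X^n)\le\tau$'' (or ``$\le\tau\cdot\omega$'') is unjustified. This is precisely the place where the single factor $t(X)$ in Pytkeev's bound must enter by some mechanism exploiting the product structure coordinatewise, and you have not identified that mechanism. Second, inside your sketch of the lemma, the assertion that ``the uncovered remainder, being a subspace of $X^n$, is again Lindel\"of of degree $\le\lambda$'' fails: the Lindel\"of number is not hereditary, and the complement of a family of zero-set boxes is open in $X^n$, which need not have Lindel\"of number $\le\lambda$. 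Third, the transfinite tree and the claim that its height is bounded by $t(X)$ are too vague to evaluate; you would need to say exactly what object is built at each node, what happens at limit stages, and why a point of $X^n_{\aleph_0}$ escaping the construction yields a violation of $t(X)\le\tau$ rather than of $t(X^n)$.

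In short: the skeleton (reduce via P1 to the inequality $l(X^n_{\aleph_0})\le 2^{\,\sup_m l(X^m)\cdot t(X)}$) is correct and is surely how Pytkeev proceeds, but the substantive topological work is missing, and two of the intermediate claims you lean on are false as stated.
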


\section{Density}

Recall that the $i$-weight $iw(X)$ of a space $X$ is the smallest
infinite cardinal number $\tau$ such that $X$ can be mapped by a
one-to-one continuous mapping onto a Tychonoff space of the weight
not greater than $\tau$.

\medskip

\begin{theorem}(Noble \cite{nob})\label{th31}   $d(C_{p}(X))=iw(X)$.
\end{theorem}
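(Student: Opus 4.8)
The plan is to prove the two inequalities $d(C_p(X)) \le iw(X)$ and $iw(X) \le d(C_p(X))$ separately, using the standard duality between one-to-one continuous maps off $X$ and weaker topologies.

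First I would establish $d(C_p(X)) \le iw(X)$. Let $\tau = iw(X)$ and fix a one-to-one continuous map $\varphi\colon X \to Y$ onto a Tychonoff space $Y$ with $w(Y) \le \tau$. Composition with $\varphi$ gives a continuous injection $\varphi^\ast\colon C_p(Y) \to C_p(X)$, $g \mapsto g\circ\varphi$, whose image $\varphi^\ast(C_p(Y))$ I claim is dense in $C_p(X)$: indeed, a basic neighborhood of $f \in C(X)$ is determined by finitely many points $x_1,\dots,x_n$ and an $\varepsilon > 0$, and since $\varphi$ is injective the points $\varphi(x_1),\dots,\varphi(x_n)$ are distinct, so by Tychonoff-ness of $Y$ one finds $g \in C(Y)$ with $g(\varphi(x_i))$ close to $f(x_i)$, hence $g\circ\varphi$ lies in the neighborhood. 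Therefore $d(C_p(X)) \le d(C_p(Y))$. Now $w(Y) \le \tau$ forces $|C(Y)| \le \tau^{\omega}\cdot\ldots$; more cleanly, a space of weight $\le \tau$ has a dense subset of size $\le \tau$ (namely $C_p(Y) \subseteq \mathbb{R}^Y$ and one takes a dense subset built from a base of $Y$), so $d(C_p(Y)) \le \tau$. Combining, $d(C_p(X)) \le iw(X)$.

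Next I would prove the reverse inequality $iw(X) \le d(C_p(X))$. Let $D \subseteq C_p(X)$ be dense with $|D| = d(C_p(X)) =: \kappa$. Consider the diagonal map $\Delta_D\colon X \to \mathbb{R}^D$ given by $x \mapsto (f(x))_{f\in D}$; it is continuous since each coordinate is, and I must check it is one-to-one. If $x \ne y$ in $X$, then by the Tychonoff property there is $h \in C(X)$ with $h(x) \ne h(y)$; since $D$ is dense in $C_p(X)$, there is $f \in D$ agreeing with $h$ closely enough at $\{x,y\}$ that $f(x) \ne f(y)$, so $\Delta_D(x) \ne \Delta_D(y)$. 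Hence $\Delta_D$ is a continuous injection of $X$ onto a subspace $Z \subseteq \mathbb{R}^D$ with $w(Z) \le w(\mathbb{R}^D) = |D| = \kappa$ (for $\kappa$ infinite), and $Z$ is Tychonoff as a subspace of a Tychonoff space. Therefore $iw(X) \le \kappa = d(C_p(X))$.

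The two inequalities together give $d(C_p(X)) = iw(X)$. I do not expect a genuine obstacle here; the only points requiring care are (i) verifying density of $\varphi^\ast(C_p(Y))$ and injectivity of $\Delta_D$, both of which reduce to the separation of finitely many distinct points, and (ii) the cardinal arithmetic $w(\mathbb{R}^\kappa) = \kappa$ for infinite $\kappa$, which is standard. One should also note at the outset that all cardinal functions are taken $\ge \omega$, so the finite case is absorbed.
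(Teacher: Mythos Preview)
The paper does not give a proof of this theorem: it is quoted as a classical result of Noble and of McCoy--Ntantu and used as a black box (for instance in the derivation of Pestriakov's Theorem~P6 and in the proof of Theorem~8.1). So there is no ``paper's own proof'' to compare against.

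Your argument is the standard one and is correct. The only passage that is written a bit loosely is the bound $d(C_p(Y))\le w(Y)$ in the first half: the sentence ``a space of weight $\le\tau$ has a dense subset of size $\le\tau$'' is about $Y$, not about $C_p(Y)$, and the parenthetical explanation does not quite finish the job. A clean way to close this gap is to invoke $nw(C_p(Y))=nw(Y)\le w(Y)$ and hence $d(C_p(Y))\le nw(C_p(Y))\le\tau$; alternatively, embed $Y$ into $[0,1]^{\tau}$, note that the restriction map $C_p([0,1]^{\tau})\to C_p(Y)$ has dense image, and use that the rational polynomials in the $\tau$ coordinate projections form a dense set of size $\tau$ in $C_p([0,1]^{\tau})$. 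With that adjustment the proof is complete.
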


Let $A\subset Y$. Put $[A]'_{\tau}=\bigcup\{\overline{B}: B\subset
A, |B|\leq \tau \}$, $T(x,A,Y)=\min \{\tau: x\in [A]'_{\tau}\}$,
$T(A,Y)=\sup\{T(x,A,Y): x\in \overline{A}\}$. Then
$T(C_p(X),B_{\alpha}(X))=\omega_0$. Since $C_p(X)$ is dense in
$B_{\alpha}(X)$, $d(B_{\alpha}(X))\leq d(C_p(X))=iw(X)$.

Let $\mu=d(B_{\alpha}(X))$. Then there is $D\subset B_{\alpha}(X)$
such that $|D|=\mu$ and $\overline{D}=B_{\alpha}(X)$. The equality
$T(C_p(X),B_{\alpha}(X))=\omega_0$ means that
$[C_p(X)]'_{\omega_0}=B_{\alpha}(X)$. For each $d\in D$, fix a set
$C_d\subset C_p(X)$ such that $|C_d|\leq \omega_0$ and $d\in
\overline{C_d}$. Then the set $S=\bigcup \{C_d : d\in D\}$ is
dense in $C_p(X)$ and $|S|\leq \mu$. Hence, $d(B_{\alpha}(X))\geq
d(C_p(X))$. Thus, we have the Theorem P6 of Pestryakov that
$d(B_{\alpha}(X))=iw(X)$ ($0<\alpha\leq \omega_1$).

\begin{example} Let $X$ be a first-countable space such that
$|X|\leq \mathfrak{c}$ and  $iw(X)>\omega_0$. Then
$d(B_{\alpha}(X))=iw(X)>iw(X_{\aleph_0})=d(C_p(X_{\aleph_0}))$.
\end{example}

For example, if $Z$ is the set of all countable ordinals endowed
with the interval topology, then
$d(B_{\alpha}(Z))>d(C_p(Z_{\aleph_0}))$.

Note also that if $\mathfrak{c}< 2^{\omega_1}$ then
$|B_{\omega_1}(Z)|=\mathfrak{c}<2^{\omega_1}=|C_p(Z_{\aleph_0})|$,
otherwise $|B_{\omega_1}(Z)|=|C_p(Z_{\aleph_0})|$.

\section{Pseudocharacter, pseudoweight}

It is well-known that $\psi(C_p(X))=iw(C_p(X))=d(X)$ \cite{arh2}.

\begin{theorem} $\psi(\mathbb{B}(X))=\psi w(\mathbb{B}(X))=i\chi
(\mathbb{B}(X))=iw(\mathbb{B}(X))=d(X_{\aleph_0})$.
\end{theorem}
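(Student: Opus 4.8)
The plan is to prove the chain of equalities
$$\psi(\mathbb{B}(X))=\psi w(\mathbb{B}(X))=i\chi(\mathbb{B}(X))=iw(\mathbb{B}(X))=d(X_{\aleph_0})$$
by establishing a circle of inequalities. In general one always has $\psi(Y)\le\psi w(Y)$ and $\psi(Y)\le i\chi(Y)$ and $i\chi(Y)\le iw(Y)$ for any Tychonoff space $Y$ (these are elementary: a pseudoweight base is in particular a pseudocharacter base, a one-to-one continuous map onto a space of weight $\tau$ has character $\le\tau$, etc.), and $\psi w(Y)\le iw(Y)$ similarly. So it suffices to close the loop by showing
$$d(X_{\aleph_0})\le \psi(\mathbb{B}(X))\quad\text{and}\quad iw(\mathbb{B}(X))\le d(X_{\aleph_0}),$$
which, together with the trivial inequalities above, forces all five cardinals to coincide.

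For the inequality $iw(\mathbb{B}(X))\le d(X_{\aleph_0})$: let $D$ be a dense subset of $X_{\aleph_0}$ with $|D|=d(X_{\aleph_0})=\tau$. The restriction map $\pi\colon \mathbb{B}(X)\to \mathbb{R}^{D}$, $f\mapsto f\restriction D$, is continuous, and since $\mathbb{B}(X)\subseteq C(X_{\aleph_0})$ and continuous functions on $X_{\aleph_0}$ agreeing on a dense set are equal, $\pi$ is one-to-one. Its image lies in $\mathbb{R}^{D}$, which has weight $\le\tau$, so $iw(\mathbb{B}(X))\le\tau=d(X_{\aleph_0})$. This is the routine direction and mirrors Theorem~\ref{th31}.

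For the inequality $d(X_{\aleph_0})\le\psi(\mathbb{B}(X))$: this is where the specific structure of $\mathbb{B}$ — namely that $L(X)\subseteq \mathbb{B}(X)$ — must be exploited, and I expect this to be the main obstacle. Let $\tau=\psi(\mathbb{B}(X))$ and fix a family $\{U_\xi:\xi<\tau\}$ of open neighborhoods of $\mathbf{1}$ in $\mathbb{B}(X)$ with $\bigcap_\xi U_\xi=\{\mathbf{1}\}$; we may take each $U_\xi$ of the basic form $U_\xi=\{f\in\mathbb{B}(X): |f(x)-1|<\varepsilon_\xi \text{ for } x\in S_\xi\}$ with $S_\xi$ a finite subset of $X$. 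Put $D=\bigcup_{\xi<\tau}S_\xi$, a set of size $\le\tau$. I claim $D$ is dense in $X_{\aleph_0}$. If not, pick a nonempty zero-set $Z\in Z(X)$ of $X_{\aleph_0}$ (equivalently a basic $G_\delta$ built from zero-sets of $X$; here one uses that zero-sets of $X_{\aleph_0}$ can be captured by the $L(X)$-machinery, as in the tightness argument of Theorem~\ref{lem1}) with $Z\cap D=\emptyset$, and let $g=f_{Z}\in L(X)\subseteq\mathbb{B}(X)$ be the characteristic function of a zero-set $Z\subseteq X$ with $Z\cap D=\emptyset$ and $Z\ne\emptyset$; then $h:=\mathbf{1}-g$ vanishes on $Z$, equals $1$ off $Z$, hence $h\ne\mathbf{1}$ but $h\restriction D\equiv 1$, so $h\in U_\xi$ for every $\xi$, contradicting $\bigcap_\xi U_\xi=\{\mathbf{1}\}$. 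Therefore $d(X_{\aleph_0})\le|D|\le\tau=\psi(\mathbb{B}(X))$.

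The delicate point to get right is the passage from "$D$ not dense in $X_{\aleph_0}$" to "there is a nonempty zero-set $Z$ of $X$ missing $D$ with $f_Z\in\mathbb{B}(X)$": one must produce $Z$ as a genuine zero-set of $X$ (so that $f_Z\in L(X)$), not merely a $G_\delta$; this is handled exactly as in the proof of Theorem~\ref{lem1}, where one writes a basic $X_{\aleph_0}$-neighborhood of a point off $\overline{D}$ as a countable intersection of zero-sets of $X$ and then picks one zero-set $F(z_i)$ inside it for each coordinate — here a single point and a single zero-set suffice. Once this is in place, combining $d(X_{\aleph_0})\le\psi(\mathbb{B}(X))\le\psi w(\mathbb{B}(X))\le iw(\mathbb{B}(X))\le d(X_{\aleph_0})$ and $\psi(\mathbb{B}(X))\le i\chi(\mathbb{B}(X))\le iw(\mathbb{B}(X))$ closes all the gaps, and we may also note $iw(\mathbb{B}(X))=d(X_{\aleph_0})$ recovers Theorem~P7 in the special case $\mathbb{B}(X)=B_\alpha(X)$ (using $d(X_{\aleph_0})$ in place of Pestriakov's formulation).
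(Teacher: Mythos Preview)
Your approach is essentially the paper's: the trivial chain $\psi\le\psi w\le iw$ and $\psi\le i\chi\le iw$, the upper bound $iw(\mathbb{B}(X))\le d(X_{\aleph_0})$ via restriction (the paper instead quotes $iw(C_p(X_{\aleph_0}))=d(X_{\aleph_0})$ and monotonicity of $iw$ under subspaces), and the key lower bound $d(X_{\aleph_0})\le\psi(\mathbb{B}(X))$ by collecting the finite supports of a pseudobase and exhibiting a second function in the intersection if that set is not dense.

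There is one genuine slip. Your witness $h=\mathbf{1}-f_Z$ need not lie in $\mathbb{B}(X)$: the definition only guarantees $L(X)\subseteq\mathbb{B}(X)\subseteq C(X_{\aleph_0})$, and $L(X)$ consists of combinations $\sum n_i f_{Z_i}$ with $n_i\in\omega$, so it is not closed under subtraction (for instance $\chi_{(0,1]}=\mathbf{1}-f_{\{0\}}\notin L([0,1])$). Since $\mathbb{B}(X)$ carries no algebraic hypothesis, you cannot conclude $h\in\mathbb{B}(X)$, and the contradiction collapses when $\mathbb{B}(X)=L(X)$. The fix is immediate and is exactly what the paper does: take the pseudobase at $\mathbf{0}$ rather than $\mathbf{1}$, so the witness is $f_Z$ itself, which lies in $L(X)\subseteq\mathbb{B}(X)$ by definition.

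Also, the step you flag as ``delicate'' is in fact immediate: the zero-sets of $X$ form a \emph{base} for the topology of $X_{\aleph_0}$, so if your set $D$ is not dense in $X_{\aleph_0}$ there is automatically a nonempty $Z\in Z(X)$ with $Z\cap D=\emptyset$. No appeal to the machinery of Theorem~\ref{lem1} is needed.
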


\begin{proof} Note that if there exists a condensation (one-to-one
continuous map) $f: Y \rightarrow Z$ of a space $Y$ onto a space
$Z$ then $\psi(Y)\leq \psi(Z)\leq \chi(Z)\leq w(Z)$ and
$\psi(Y)\leq \psi w(Z)\leq w(Z)$. Since the space $Z$ is
arbitrary, we get that $\psi(Y)\leq i\chi(Y)\leq iw(Y)$ and
$\psi(Y)\leq \psi w(Y)\leq iw(Y)$.

Since $iw(C_p(X))=d(X)$ (Theorem \ref{th31}) and
$\mathbb{B}(X)\subset C_p(X_{\aleph_0})$, it is enough to prove
that $d(X_{\aleph_0})\leq \psi(\mathbb{B}(X))$.

Assume that $d(X_{\aleph_0})> \psi(\mathbb{B}(X))$. Let $\{{\bf
0}\}= \bigcap \{U_\xi : \xi\in M\}$, $|M|=\psi(\mathbb{B}(X))$. We
can assume that $U_\xi=(x_1(\xi),...,x_{n}(\xi),
\epsilon(\xi))=\{f: f\in \mathbb{B}(X), |f(x_i(\xi))|<
\epsilon(\xi)\}$. Let $A=\{x_i(\xi): \xi\in M, 1\leq i \leq
n(\xi)\}$. Since $|A|<d((X_{\aleph_0})$, there exists a zero-set
$D$ in $X$ such that $D\cap A=\emptyset$. Note that the
characteristic function $\chi_D$ of the set $D$ is in $
\mathbb{B}(X)$, $\chi_D\neq {\bf 0}$ and $\chi_D\in \bigcap
\{U_\xi : \xi\in M\}$, a contradiction.
\end{proof}

\section{Network weight}

\begin{lemma}\label{lemm} Define the function $\varphi: X_{\aleph_0} \rightarrow
C_p(\mathbb{B}(X))$ by the rule: $\varphi(x)(f)=f(x)$ for each
$f\in \mathbb{B}(X)$. Then $X_{\aleph_0}$ is homeomorphic to
$\varphi(X_{\aleph_0})\subset C_p(\mathbb{B}(X))$.

\end{lemma}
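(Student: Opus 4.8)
The plan is to show that the evaluation map $\varphi\colon X_{\aleph_0}\to C(\mathbb{B}(X))$, $\varphi(x)(f)=f(x)$, is a homeomorphism onto its image, which amounts to three verifications: (i) $\varphi$ is well-defined, i.e.\ each $\varphi(x)$ is actually continuous on $\mathbb{B}(X)$; (ii) $\varphi$ is injective; and (iii) $\varphi$ is a topological embedding, i.e.\ both $\varphi$ and $\varphi^{-1}$ are continuous. Step (i) is immediate: for fixed $x$, the map $f\mapsto f(x)$ is continuous on $\mathbb{B}(X)\subseteq C_p(X_{\aleph_0})$ simply because $\mathbb{B}(X)$ carries the pointwise convergence topology, so $\varphi(x)\in C(\mathbb{B}(X))$.

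For step (ii), suppose $x\neq y$ in $X$. Since $X$ is Tychonoff there is a zero-set $Z\in Z(X)$ with $x\in Z$ and $y\notin Z$ (take $Z=Z(g)$ for a continuous $g$ separating the points). Then the characteristic function $f_Z\in L(X)\subseteq \mathbb{B}(X)$ satisfies $f_Z(x)=1\neq 0=f_Z(y)$, so $\varphi(x)(f_Z)\neq\varphi(y)(f_Z)$ and hence $\varphi(x)\neq\varphi(y)$. This uses exactly that $L(X)\subseteq\mathbb{B}(X)$, which is the defining feature of the class $\mathbb{B}$.

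For step (iii), continuity of $\varphi$ holds because for each fixed $f\in\mathbb{B}(X)$ the coordinate map $x\mapsto \varphi(x)(f)=f(x)$ is continuous on $X_{\aleph_0}$ (indeed $f\in C(X_{\aleph_0})$), and $C(\mathbb{B}(X))$ carries the pointwise topology, so continuity into the product of copies of $\mathbb{R}$ indexed by $\mathbb{B}(X)$ follows coordinatewise. For the inverse, I would exhibit a subbasic-neighbourhood argument: a basic neighbourhood of $x$ in $X_{\aleph_0}$ may be taken of the form $\bigcap_{i=1}^k CZ(g_i)$ with $g_i$ continuous (complements of zero-sets form a base for the $G_\delta$-topology), or more directly a set $\{t: h_i(t)\in U_i,\ 1\le i\le k\}$ with $h_i\in C(X)\subseteq C(X_{\aleph_0})\subseteq\mathbb{B}(X)$; since each $h_i$ belongs to $\mathbb{B}(X)$, the set $\{g\in\varphi(X_{\aleph_0}): g(h_i)\in U_i,\ 1\le i\le k\}$ is open in $\varphi(X_{\aleph_0})$ and its $\varphi$-preimage is the given neighbourhood. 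Because $C(X)\subseteq \mathbb{B}(X)$ and $C(X)$ (equivalently, the $h_i$'s together with their $G_\delta$-combinations, but ordinary continuous functions already separate points and generate the $G_\delta$-topology as its weak topology by the remark that $X_{\aleph_0}$ is the weak topology generated by $B_\alpha(X)$) generates the topology of $X_{\aleph_0}$, this suffices. The one point needing care—the main obstacle—is making sure the functions used to recover the topology of $X_{\aleph_0}$ genuinely lie in $\mathbb{B}(X)$: I would handle this by noting $C(X)\subseteq B_1(X)$ is not automatically inside every $\mathbb{B}(X)$, but $L(X)\subseteq\mathbb{B}(X)$ is, and the zero-set characteristic functions $f_Z$ already witness continuity of $\varphi^{-1}$, since a basic $G_\delta$-neighbourhood of $x$ can be written as a countable intersection of cozero sets, hence contains a finite intersection of cozero neighbourhoods, and each cozero set $CZ(g)$ satisfies $x\in CZ(g)$ iff $f_{Z(g)}(x)=0$; thus neighbourhoods of $x$ in $X_{\aleph_0}$ pull back from neighbourhoods of $\varphi(x)$ determined by finitely many $f_{Z_i}\in L(X)\subseteq\mathbb{B}(X)$. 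Combining (i)–(iii) gives that $\varphi$ is a homeomorphism onto $\varphi(X_{\aleph_0})$.
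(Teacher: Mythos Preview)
Your overall strategy matches the paper's: verify that $\varphi$ is well-defined, injective, continuous, and open onto its image, using the characteristic functions $f_Z\in L(X)\subseteq\mathbb{B}(X)$ for the nontrivial direction. Steps (i), (ii), and the continuity of $\varphi$ are fine and are essentially what the paper does.

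The gap is in your argument for continuity of $\varphi^{-1}$. You assert that ``complements of zero-sets form a base for the $G_\delta$-topology'' and then work with finite intersections $\bigcap_{i=1}^k CZ(g_i)$ of cozero sets. This is false: cozero sets form a base for the original topology of $X$, not for $X_{\aleph_0}$, and a finite intersection of cozero sets is just an ordinary open set of $X$. A basic $X_{\aleph_0}$-neighbourhood is a $G_\delta$, i.e.\ a \emph{countable} intersection, so you cannot reduce to finitely many $f_{Z_i}$ this way; your sentence ``hence contains a finite intersection of cozero neighbourhoods'' has the containment going the wrong direction.

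The fix, which is exactly the paper's route, is to use the zero-sets themselves as a base for $X_{\aleph_0}$ (this is how $X_{\aleph_0}$ is introduced in the paper). For a zero-set $M$ with $x\in M$ one has
\[
\varphi(M)=\{h\in\varphi(X_{\aleph_0}): |h(\chi_M)-1|<1\},
\]
since $\varphi(y)(\chi_M)=\chi_M(y)\in\{0,1\}$ equals $1$ precisely when $y\in M$. This is a subbasic open set of $C(\mathbb{B}(X))$ restricted to $\varphi(X_{\aleph_0})$, because $\chi_M\in L(X)\subseteq\mathbb{B}(X)$. Hence $\varphi$ carries basic open sets to open sets, and $\varphi^{-1}$ is continuous. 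With this one-line correction your proof is complete and coincides with the paper's.
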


\begin{proof}  Obviously, $\varphi$ is bijection from $X_{\aleph_0}$ onto
$\varphi(X_{\aleph_0})$.

 Note that $\mathbb{B}(X)\subset C_p(X_{\aleph_0})$. The equality
$\varphi^{-1}(\{h: h\in \varphi(X_{\aleph_0}),
|h(f_i)-\varphi(x)(f_i)|<\epsilon, 1\leq i \leq n, f_i\in
\mathbb{B}(X)\})=\bigcap\limits_{i=1}^n f^{-1}_i (f_i(x)-\epsilon,
f_i(x)+\epsilon)$ implies that $\varphi$ is a continuous map.

 The set $\varphi(M)=\{h: h\in \varphi(X),
|h(\chi_M)-1|<1\}$ for a characteristic function $\chi_M$ of the
zero-set $M$ is an open set in $\varphi(X)$. Thus, $\varphi^{-1}$
is a continuous map.

\end{proof}

\begin{theorem}
$nw(\mathbb{B}(X))=nw(X_{\aleph_0})$.
\end{theorem}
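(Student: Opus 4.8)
The plan is to establish the two inequalities $nw(\mathbb{B}(X))\le nw(X_{\aleph_0})$ and $nw(X_{\aleph_0})\le nw(\mathbb{B}(X))$ separately, using the preceding lemma for the harder direction. For the first inequality, I would recall the standard fact from $C_p$-theory that $nw(C_p(Y))=nw(Y)$ for any space $Y$. Since $\mathbb{B}(X)$ is a subspace of $C_p(X_{\aleph_0})$ and network weight is monotone with respect to subspaces, we get $nw(\mathbb{B}(X))\le nw(C_p(X_{\aleph_0}))=nw(X_{\aleph_0})$. This handles one direction immediately.

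For the reverse inequality, the idea is to use the evaluation map $\varphi\colon X_{\aleph_0}\to C_p(\mathbb{B}(X))$ from the Lemma just stated, which embeds $X_{\aleph_0}$ homeomorphically onto $\varphi(X_{\aleph_0})\subseteq C_p(\mathbb{B}(X))$. Then $nw(X_{\aleph_0})=nw(\varphi(X_{\aleph_0}))\le nw(C_p(\mathbb{B}(X)))=nw(\mathbb{B}(X))$, again invoking $nw(C_p(Y))=nw(Y)$ with $Y=\mathbb{B}(X)$ together with monotonicity of $nw$ under taking subspaces. Combining both inequalities yields the claimed equality.

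The main subtlety I expect is making sure the generality in the statement is respected: $\mathbb{B}(X)$ ranges over all spaces $Y$ with $L(X)\subseteq Y\subseteq C(X_{\aleph_0})$, so one must check that the evaluation map $\varphi$ genuinely separates points and zero-sets of $X_{\aleph_0}$ using only functions available in $\mathbb{B}(X)$ — but this is exactly what the preceding Lemma guarantees, since its proof only uses characteristic functions of zero-sets, which lie in $L(X)\subseteq\mathbb{B}(X)$. So the real content is already packaged in the Lemma, and the theorem is a short two-line argument once one cites $nw(C_p(Y))=nw(Y)$ and the Lemma. I would write it accordingly: one sentence for the upper bound via $\mathbb{B}(X)\subseteq C_p(X_{\aleph_0})$, one sentence for the lower bound via the homeomorphic embedding of the Lemma, and a closing sentence combining them.
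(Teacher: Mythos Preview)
Your proposal is correct and follows essentially the same approach as the paper: the upper bound via $\mathbb{B}(X)\subseteq C_p(X_{\aleph_0})$ and $nw(C_p(Y))=nw(Y)$, and the lower bound via the preceding Lemma embedding $X_{\aleph_0}$ into $C_p(\mathbb{B}(X))$. In fact your write-up is more explicit than the paper's one-sentence proof, which cites only $nw(C_p(Y))=nw(Y)$ and the inclusion $\mathbb{B}(X)\subset C(X_{\aleph_0})$ and leaves the use of the Lemma for the reverse inequality implicit.
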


\begin{proof} Since $nw(C_p(Y))=nw(Y)$ for a Tychonoff space $Y$
\cite{arh2} and $\mathbb{B}(X)\subseteq C(X_{\aleph_0})$ we get
that $nw(\mathbb{B}(X))\leq nw(X_{\aleph_0})$. By Lemma
\ref{lemm}, $nw(X_{\aleph_0})\leq nw(C_p(\mathbb{B}(X))$. Thus,
$nw(X_{\aleph_0})\leq nw(\mathbb{B}(X))$.
\end{proof}

Note that $nw(X)\leq nw(X_{\aleph_0})\leq nw(X)^{\omega_0}$. Then
we have the following result.

\begin{corollary} If $\kappa=\kappa^{\omega_0}$, then
$nw(\mathbb{B}(X))=nw(C_p(X_{\aleph_0}))=nw(X)=\kappa$.
\end{corollary}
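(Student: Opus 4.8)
The plan is to deduce this corollary directly from the theorem $nw(\mathbb{B}(X))=nw(X_{\aleph_0})$ together with the chain of inequalities $nw(X)\le nw(X_{\aleph_0})\le nw(X)^{\omega_0}$ recorded just before the statement. First I would observe that the hypothesis $\kappa=\kappa^{\omega_0}$ is exactly what collapses the outer inequality: writing $\nu=nw(X)$, the two-sided bound gives $\nu\le nw(X_{\aleph_0})\le\nu^{\omega_0}$, and if we are in the situation $\nu=\kappa=\kappa^{\omega_0}$ then $\nu^{\omega_0}=\kappa^{\omega_0}=\kappa=\nu$, so the bound forces $nw(X_{\aleph_0})=\nu=\kappa$ as well.

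Second, I would invoke the theorem $nw(\mathbb{B}(X))=nw(X_{\aleph_0})$ to conclude $nw(\mathbb{B}(X))=\kappa$. Third, for the piece $nw(C_p(X_{\aleph_0}))$ I would note that $C_p(X_{\aleph_0})$ is itself a member of $\mathbb{B}$ (it is listed in the excerpt as $C(X_{\aleph_0})\in\mathbb{B}$), so the same theorem applied with $\mathbb{B}(X)=C_p(X_{\aleph_0})$ gives $nw(C_p(X_{\aleph_0}))=nw(X_{\aleph_0})=\kappa$; alternatively one cites the classical identity $nw(C_p(Y))=nw(Y)$ for $Y=X_{\aleph_0}$, which is what underlies that theorem anyway. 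Assembling the three equalities yields $nw(\mathbb{B}(X))=nw(C_p(X_{\aleph_0}))=nw(X)=\kappa$, which is the assertion.

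I do not anticipate a genuine obstacle here: the corollary is a bookkeeping consequence of results already in hand. The only point requiring a word of care is the reading of the statement — one must interpret the hypothesis as saying that $\kappa$ is the common value $nw(X)$ (so that $\kappa=\kappa^{\omega_0}$ is a constraint on $X$), since otherwise the conclusion $nw(X)=\kappa$ would be unmotivated. With that reading the argument is three lines, and the "hard part," such as it is, is merely confirming that $C_p(X_{\aleph_0})$ falls under the scope of the preceding theorem, which is immediate from the definition of the class $\mathbb{B}$.
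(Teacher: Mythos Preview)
Your proposal is correct and matches the paper's intended derivation: the corollary is stated without proof, immediately after the theorem $nw(\mathbb{B}(X))=nw(X_{\aleph_0})$ and the inequality $nw(X)\le nw(X_{\aleph_0})\le nw(X)^{\omega_0}$, and you have spelled out exactly the three-line argument the paper is gesturing at. Your remark about the implicit reading of the hypothesis (that $\kappa$ is $nw(X)$) is apt and is indeed how the statement must be interpreted.
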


\section{The Lindel$\ddot{o}$f number}

The following result is well known in $C_p$-theory  \cite{asa}.

\begin{theorem}(Asanov)\label{th80} $l(C_p(X))\geq t^*(X)$.
\end{theorem}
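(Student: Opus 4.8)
The plan is to prove Asanov's inequality in the equivalent pointwise form: for each fixed $n\in \mathbb{N}$, $t(X^n)\le \kappa$, where $\kappa:=l(C_p(X))$. Concretely, given $A\subseteq X^n$ and a point $a=(a_1,\dots,a_n)\in \overline{A}$, I must produce $B\subseteq A$ with $|B|\le \kappa$ and $a\in \overline{B}$; this is exactly what $t(X^n)\le \kappa$ demands. The whole argument rests on the duality that turns a neighbourhood base of $a$ in $X^n$ (described by finitely many members of $C(X)$, since $X$ is Tychonoff) into an open cover of a suitable closed subspace of $C_p(X)$, whose Lindel$\ddot{o}$f number is controlled by $\kappa$ via the closed-monotonicity $l(F)\le l(C_p(X))$ for $F$ closed in $C_p(X)$.

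First I would settle $n=1$, which already exhibits the mechanism. Fix $A\subseteq X$ and $x\in \overline{A}$, and let $L:=\{f\in C_p(X):f(x)=1\}$, a closed affine subspace, so $l(L)\le \kappa$. For each $y\in A$ put $V_y:=\{f\in L:f(y)>1/2\}$, an open subset of $L$. The family $\{V_y:y\in A\}$ covers $L$: for $f\in L$ the set $\{z:f(z)>1/2\}$ is an open neighbourhood of $x$, hence (as $x\in \overline{A}$) meets $A$ at some $y$, giving $f\in V_y$. Choosing a subcover $\{V_y:y\in B\}$ with $B\subseteq A$, $|B|\le l(L)\le \kappa$, I claim $x\in \overline{B}$: otherwise complete regularity supplies $g\in C(X)$ with $g(x)=1$ and $g\equiv 0$ on $\overline{B}$, whence $g\in L\setminus \bigcup_{y\in B}V_y$, contradicting that $\{V_y:y\in B\}$ covers $L$. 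Thus $B$ witnesses $t(X)\le \kappa$.

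For general $n$ I would run an induction on $n$ mirroring the splitting used in Propositions \ref{pr1} and \ref{pr21}: write $X^n=\widetilde{X}^n\cup \bigcup_{i\neq j}X^n_{ij}$, where $X^n_{ij}=\{(x_1,\dots,x_n):x_i=x_j\}\cong X^{n-1}$ absorbs the diagonal coincidences (handled by the inductive hypothesis), and $\widetilde{X}^n$ is the set of points with pairwise distinct coordinates. On the general-position part the previous argument generalises: take $L:=\{f\in C_p(X):f(a_i)=1,\ 1\le i\le n\}$ (closed, so $l(L)\le \kappa$) and $V_y:=\{f\in L:f(y_i)>1/2,\ 1\le i\le n\}$; the family covers $L$ because the product $(\{z:f(z)>1/2\})^n$ is a neighbourhood of $a$ and $a\in \overline{A}$, while a $\kappa$-sized subcover is converted back into a set $B\subseteq A$ accumulating at $a$ by building, from a basic product neighbourhood $\prod_i O_i\ni a$ missing $B$, a function $f\in L$ equal to $1$ at each $a_i$ and $\le 1/2$ at the offending coordinate of every $y\in B$.

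The main obstacle is precisely this last construction in the product case: the coupling of coordinates. If a point $y\in B$ has some coordinate $y_i$ equal to one of the $a_j$, then every $f\in L$ is pinned to $f(y_i)=1$ and cannot be made small there, so $V_y$ may swallow all of $L$ and destroy the non-covering step. This is exactly why the diagonal pieces $X^n_{ij}$ must be peeled off first and dispatched by induction, leaving a general-position part on which the separating neighbourhoods $O_i$ can be chosen pairwise disjoint and disjoint from $\{a_1,\dots,a_n\}$, so that a single $f$ takes the value $1$ on all the $a_j$ and a value $\le 1/2$ at the required coordinate of each $y\in B$. Once this coordinatewise separation is secured, complete regularity and the closed-monotonicity of the Lindel$\ddot{o}$f number close the contradiction exactly as for $n=1$, yielding $t(X^n)\le l(C_p(X))$ for all $n$ and hence the stated inequality.
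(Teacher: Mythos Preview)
Your $n=1$ argument is correct, but the inductive step has a genuine gap that the diagonal decomposition does not repair. Observe that your set $V_y=\{f\in L:f(y_i)>1/2,\ 1\le i\le n\}$ depends only on the \emph{unordered} set $\widetilde{y}=\{y_1,\dots,y_n\}$; consequently the open cover $\{V_y:y\in A\}$ of $L$ cannot distinguish a tuple from any of its permutations, and an arbitrary $\kappa$-sized subcover need not accumulate at $a$ in the product topology. Concretely, take $X=\mathbb{R}$, $n=2$, $a=(0,1)$, $L=\{f:f(0)=f(1)=1\}$, and $A=\{(\tfrac1k,1-\tfrac1k):k\ge 3\}\cup\{(1-\tfrac1k,\tfrac1k):k\ge 3\}$, so that $a\in\overline{A}$. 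The family $B=\{(1-\tfrac1k,\tfrac1k):k\ge 3\}\subset A$ already satisfies $\bigcup_{y\in B}V_y=L$ (for any $f\in L$, continuity at $0$ and $1$ gives $f(\tfrac1k),f(1-\tfrac1k)>\tfrac12$ for large $k$), yet $B$ converges to $(1,0)$ and $a\notin\overline{B}$. Peeling off the diagonals $X^n_{ij}$ achieves nothing here: both $a$ and every point of $B$ lie in $\widetilde{X}^2$, and no coordinate of any $y\in B$ equals $a_1$ or $a_2$. Thus the obstacle is not merely ``$y_i=a_j$'' but the order-blindness of the $V_y$, and the contradiction you announce (building a single $f\in L$ small at the offending coordinate of each $y\in B$) simply cannot be manufactured from a product neighbourhood $\prod O_i$ missing $B$.

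The paper does not give its own proof of Asanov's theorem (it is cited from \cite{asa}), but its proof of the generalization, Theorem~\ref{th8}, shows how the standard argument avoids this trap: one passes to the hyperspace $X_p=([X]^{<\omega},\tau)$ with base $\{H^*:H\text{ open}\}$, $H^*=\{F\in[X]^{<\omega}:F\subseteq H\}$, and proves $t(X_p)\le l(C_p(X))$ by exactly your covering-of-$L$ mechanism---which works there because basic neighbourhoods of a finite set $S$ are precisely the ``cubes'' $H^*$, matching what the $V_y$ can detect. The passage back to $X^n$ is the separate inequality $t(X^n)\le t(X_p)$, also due to Asanov. If you want a direct product-space proof, you must replace the constant target value $1$ by distinct values at the $a_i$ (so that permuted tuples are penalised) and carry out a considerably more delicate construction of the witness $f$; the inductive splitting along $X^n_{ij}$ alone does not suffice.
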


For a space $\mathbb{B}(X)\in \mathbb{B}$, we have the following
result.

\begin{theorem}\label{th8} $l(\mathbb{B}(X))\geq t^*(X_{\aleph_0})$.
\end{theorem}

\begin{proof} Denote as usually $[Y]^{<\omega}$ the set of all
non-empty finite subsets of a space $Y$. Consider the topological
space $Y_p=([Y_{\aleph_0}]^{<\omega}, \tau)$ where the topology
$\tau$ generated by the base $\beta=\{H^* : H^*=\{F\in
[Y_{\aleph_0}]^{<\omega} : F\subset H\}$ for any open $H$ in
$Y\}$. Since $t(Y^n)\leq t(Y_p)$ for every $n\in \omega$
\cite{asa} it is enough to prove that $t(X_{\aleph_0 p})\leq
l(\mathbb{B}(X))$.

Let $M\subset X_{\aleph_0 p}$ and $S\in \overline{M}\setminus M$.
Note that the family $\{ <p, (-1,1)>: p\in M\}$ is a cover of the
set $\{f: f\in \mathbb{B}(X), f(S)=0\}$ where $<p, (-1,1)>=\{f:
f\in \mathbb{B}(X), f(p)\subset (-1,1)\}$. Since $\{f: f\in
\mathbb{B}(X), f(S)=0\}$ is closed in $\mathbb{B}(X)$, choose
$M'\subset M$ such that $|M'|\leq l(\mathbb{B}(X))$ and $\{ <p,
(-1,1)>: p\in M'\}$ is a cover of $\{f: f\in \mathbb{B}(X),
f(S)=0\}$. Then $S\in \overline{M'}$.

\end{proof}

Note that
$l(B_1([0,1]))=\mathfrak{c}>\omega_0=t^*([0,1]_{\aleph_0})$.

\medskip

{\bf Question.} Is it possible to replace $X_{\aleph_0}$ by $X$ in
Theorem \ref{th8} ?

\section*{Acknowledgment} The author would like to thank the referee
for careful reading and valuable comments and suggestions.





\bibliographystyle{model1a-num-names}
\bibliography{<your-bib-database>}



\end{document}